\newcommand{\floor}[1]{\lfloor{#1}\rfloor}
\newcommand{\N}{{\mathbb N}}
\newcommand{\C}{{\mathbb C}}
\newcommand{\Z}{{\mathbb Z}}
\newcommand{\vep}{\varepsilon}
\newcommand{\mV}{\mathcal{V}}
\newcommand{\mfteib}{N^{\vartriangleleft}\left(F,\varepsilon\right)\cap I}
\newcommand{\mfteiba}{N^{\vartriangleleft}\left(F,\varepsilon,a\right)}
\newcommand{\Q}{{\mathbb Q}}
\newcommand{\R}{{\mathbb R}}
\newcommand{\T}{{\mathbb T\,}}
\newcommand{\J}{\mathfrak{J}}
\font\seis=cmr6
\def\wap{{\seis{\mathscr{WAP}}}}
\def\ap{{\seis{\mathscr{AP}}}}
\newcommand{\g}{{{G}}}
\newcommand{\stl}{\mathrm{StL}}
\newtheorem{definition}{Definition}[section]
\newtheorem{theorem}[definition]{Theorem}
\newtheorem{lemma}[definition]{Lemma}
\newtheorem{corollary}[definition]{Corollary}
\newtheorem{proposition}[definition]{Proposition}
\theoremstyle{definition}
\def\Z{{\mathbb Z}}
\def\R{{\mathbb R}}
\def\C{{\mathbb C}}
\def\N{{\mathbb N}}
\def\P{{\mathbb P}}
\def\T{{\mathbb T}}
\def\emptyset{\varnothing}
\def\mapsto{\mapstochar\longrightarrow}
\newcommand\restr[2]{{
  \left.\kern-\nulldelimiterspace 
  #1 
  \vphantom{\big|} 
  \right|_{#2} 
  }}
\title{\bf Sampling Almost Periodic and related  Functions}
\author{Stefano Ferri \and Jorge Galindo \and Camilo G\'omez}
\thanks{The support of  research project P1$\cdot$1B2014-35 (Universitat
Jaume I) is gratefully acknowledged. \\
Research of  the first  named  author  supported by
the Faculty of Sciences of Universidad de los Andes via the
\emph{Proyecto Semilla\/}: ``Representabilidad de grupos topol\'ogicos y de
\'Algebras de Banach y aplicaciones''.\\
Research of  the second  named  author supported by Ministerio
de Economía y Competitividad (Spain) through project MTM2016-77143-P (AEI/FEDER, UE).\\
The support is gratefully acknowl\-edged.
}
\subjclass[2010]{Primary 43A60; Secondary 11K70,42A75;  94A20}
\keywords{Almost periodic function, Bohr topology, matching set,
sampling set, chirp, polynomial phase functions, discrepancy,
uniform distribution, sampling, t--set, Bohr--dense}
\address{\noindent Stefano Ferri,
Departamento de Matem\'aticas,
Universidad de los Andes, Cra 1 18A--10, Bogot\'a D.C.,
Colombia. Apartado A\'ereo 4976.
\hfill\break \noindent E-mail: {\tt stferri@uniandes.edu.co}}
\address{\noindent Jorge Galindo, Instituto Universitario de Matem\'aticas y
Aplicaciones (IMAC)\\ Universidad Jaume I, E--12071, Cas\-tell\'on,
Spain. \hfill\break \noindent E-mail: {\tt jgalindo@uji.es}}
\address{\noindent Camilo G\'omez, Departamento de Matem\'aticas,
Universidad de los Andes, Cra 1 18A--10, Bogot\'a D.C.,
Colombia.  Apartado A\'ereo 4976. Facultad de Ingenier\'ia,
Universidad de La Sabana, Campus Universitario Puente del Com\'un, Ch\'ia, Colombia. Departamento de Matem\'aticas, Escuela Colombiana de Ingenier\'ia, AK 45 205--59, Bogot\'a D.C.,
Colombia.
\hfill\break \noindent E-mail: {\tt alfon-go@uniandes.edu.co}}
\date{Version of the 12th of December 2018}
\begin{document}
\pagestyle{myheadings}

\begin{abstract}
\par
We consider certain finite sets of circle-valued functions defined on
intervals of  real numbers and  estimate how large the  intervals must
be  for   the values of these functions  to be uniformly distributed
in an approximate way.
 This is used to establish
some general conditions under which  a random construction introduced by
Katznelson for the integers yields sets that are dense in the Bohr group.
We obtain in this way very sparse sets of real numbers (and of integers) on which
two different  almost periodic  functions cannot agree, what makes them
amenable to be used in sampling theorems for these functions. These sets
can be made as sparse as to have zero asymptotic density or  as  to be t-sets,
i.e., to be sets that intersect any of their translates in a bounded
set. Many of these results  are proved not only for almost periodic
functions but also for classes of functions generated by more general
complex exponential functions, including chirps or polynomial phase functions.
\par
\noindent{\it File\/}: {\tt FerriGalindoGomezAPSamplingSecondSubmit.tex}

\end{abstract}
\maketitle
\section{Introduction}
Many sampling processes depend  on choosing a sampling set where
the functions to be sampled are uniquely determined. In the case of
almost periodic functions on $\R$, sets with that property admit  a
neat topological description: they are precisely those subsets of $\R$
that are  dense in the Bohr topology.
A clear separation between consecutive samples is another natural
prerequisite and, for this reason, sampling sets  are  usually required to
be  uniformly discrete. Although Bohr-density and  uniform discreteness
might strike as  conflicting requirements, they  are not
totally incompatible: Bohr-dense sets can be not only uniformly discrete
but even quite sparse, as we next describe.

Collet \cite{coll96} (see also Carlen and Mendes \cite{carlemend09}
for a similar approach to functions that can be approximated by
polynomial phase functions) proved that a random
selection of  points  in  regularly spaced time-windows almost surely
produces a set that is uniformly distributed in the Bohr compactification,
hence dense in the Bohr topology (see the section  below for unexplained
terms), and whose asymptotic density is as small  as desired.

Motivated by different sort of problems, Katznelson \cite{katz73}
devised a method that almost surely produces subsets of the group of integers $\Z$
with asymptotic null density which are dense in the Bohr topology. These
constructions were further  developed  in \cite{katzkaha08,liqueffrodr02}.

In this paper we lay out general conditions for Katznelson's method
to work both in $\R$ and $\Z$ for almost periodic functions and, more
generally, for function spaces generated by complex exponentials
$e^{2\pi i p(t)}$ with $p(t)$ running over sets of polynomials  of
bounded degree with coefficients
in $\R$ (usually known as \emph{chirps} or \emph{polynomial phase functions}).
Namely, we prove that partitioning $\R$ into intervals of increasing length
and then choosing $\ell_k$ random points in each partition,
with $\ell_k$ larger than the order of the logarithm of the size of the corresponding
cell of the partition, we  almost surely obtain a set where these
functions are uniquely determined, a set of uniqueness for them, see
Definition \ref{def:uniq}.

The dense sets we obtain, as those in \cite{carlemend09,coll96,katz73},
are obtained through the Borel--Cantelli Lemma and might require extremely large
intervals to be reliable. Our approach gives hints on the minimum
size a sampling interval must be in order to use our methods for
approximate sampling.

Once our  general construction
is laid,  dense subsets  of the Bohr group with specific properties
are easy to obtain.
In order to illustrate this, we show the existence of
t--sets that are dense in the Bohr compactification.  By a t--set
we refer here to  a special sort of \emph{thin} sets, introduced by
Rudin \cite{rudin59}, which are sets of
interpolation for the weakly almost periodic functions, see Section
\ref{t-set} for more on this.

\subsection{Notation and terminology}\label{notationandterminology}

Even if most of our results are proved  for $G=\R$ or $G=\Z$, we find
more convenient to state the  results of Section \ref{sec:appdense}
for general topological Abelian groups or  for locally compact Abelian (LCA)
groups, depending on whether they  involve probabilities  or not. In
this latter case  probabilities will be computed using the (essentially
unique) invariant measure $\lambda_G$ that every locally compact group
$G$ carries.
When it comes to $G=\R$ and $G=\Z$ we assume that Haar measures are
normalized so that $\lambda_G$ is the Lebesgue measure.
If  $I\subseteq \R$  is an interval, $\lambda_\R(I)$ therefore
corresponds to the length of $I$.

In many of our proofs, we consider a random choice  $\Lambda$ of,
say, $\ell$--many elements of a subset $I$ of a given locally
compact group $G$ and we estimate the probability that
$\Lambda$  belongs to a measurable subset $\mathcal{A}$ of $I^\ell$
that is invariant under permuting coordinates. This random choice,
and its corresponding probability, is always to be understood in the
probability space  induced by $\lambda_{\g^\ell}$ on $I^\ell$. Hence
for a given $\mathcal{A}\subseteq I^\ell$ we have that
$\P\left(\left\{\Lambda\colon \Lambda\in \mathcal{A}\right\}\right)=
\lambda_{\g^\ell}( \mathcal{A})/\lambda_\g(I)^\ell$.

When it comes to duality, the circle group $\T$ has a central r\^ole. When
a specific distance on $\T$ is required,  our choice is   the \emph{angular
distance} defined for every $t,\,s\in [0,1)$ by
\[
d_{a}\left(e^{2 \pi i t},e^{2\pi is}\right)=\min\{|t-s|,1-|t-s|\}.
\]
The open ball of radius $\varepsilon$ and center 1 will be denoted as
$\mV_{\vep}$, thus:
\[ \mV_{\vep}=\left\{e^{2\pi i t}\in \T \colon |t|<\vep\right\}\subseteq  \T.\]

If $G$ is a topological group, $C(G,\T)$ denotes the multiplicative
group of continuous $\T$--valued functions, and $\widehat{G}$
its subgroup of continuous homomorphisms. We refer to
$\widehat{G}$ as the \emph{character group} of $G$.
Characters of $\R$ are denoted by $\chi_\tau$, $\tau \in \R$
where $\chi_\tau(s)=e^{2\pi i \tau s}$, for every $s\in \R$. It is
a standard fact that the map $\tau\mapsto \chi_{\tau}$ establishes
an isomorphism which maps $\R$ onto $\widehat{\R}$. The same mapping with
$\tau\in \Z$ establishes an  isomorphism which maps $\Z$ onto $\widehat{\T}$.

If $p(x)$ is a polynomial with real coefficients we denote by $\psi_p$
the function $\psi_p(t)=e^{2 \pi i p(t)}$, so that
$\psi_p=\chi_\tau$, when $p(t)=\tau t$.
The symbol $\mathfrak{C}_n$ stands for the set
$\big\{e^{2\pi i p(t)}\colon p\in \R_n[x]\big\}$, where $\R_n[x]$ denotes
the set of polynomials in $\R[x]$ of degree at most $n$.
Functions in $\mathfrak{C}_n$ are known as \emph{chirps} or \emph{polynomial phase functions
}.

We regard almost periodic functions as functions that can be
approximated by linear combinations of characters (known as trigonometric
polynomials). In the next definition we extend this approach to
spaces generated by other $\T$--valued functions, see \cite{bohr47} or
\cite{katz}, for other definitions, including H. Bohr's original one in
terms of translation numbers (called \emph{almost periods} in recent references).

\begin{definition}\label{def:a(g)}
Let $G$ be a topological group.
 For $\mathfrak{J}\subseteq C(G,\T)$, let $\mathrm{span}(\mathfrak{J})$ denote the linear span of $\mathfrak{J}$ in the vector space $C(G,\C)$. We define
 \begin{align*}
\ap_{\mathfrak{J}}(G)&=\overline{\mathrm{span}(\mathfrak{J})}^{\|\cdot\|_\infty}.\end{align*}
When $\mathfrak{J}=\widehat{G}$ we obtain the space of  \emph{almost
periodic} functions on $G$  and we simply denote it as $\ap(G)$.
\end{definition}
The topology that almost periodic functions induce on a group is known
as the \emph{Bohr topology}. This is the topology the group inherites
from its embedding in $\prod_{\chi \in \widehat{G}}\T_\chi$, with
$\T_\chi=\T$ for every $\chi$,  given by
$g\mapsto (\chi(g))_{\chi}$
for every $g\in G$.
The closure $G^{\ap}$
of $G$ in $\prod_{\chi \in \widehat{G}}\T_\chi$ is known as the Bohr
compactification of $G$ and can also be identified with the spectrum of
$\ap(G)$ when $\ap(G)$ is viewed as a Banach algebra.

A bounded and continuous function $f$ on $G$ is almost periodic precisely when
it can be continuously extended to a function $f^b\in
C(G^{\ap})$. Therefore if $D$ is a discrete subset of $G$ which is  dense
in $G^{\ap}$, then there is at most one almost periodic function $f$
on $G$ that fits any  values that were preassigned on $D$.

We next define the sort of almost periodic  functions that are most
suitable for our sampling methods, the functions with  summable
Bohr--Fourier series.  \begin{definition}\label{ap1(g)} Let $G$ be a
topological group and  $\mathfrak{J}\subseteq C(G,\T)$. We define
\begin{align*}
A_{\mathfrak{J}}(G)&=\biggl\{\sum_{\phi \in \mathfrak{J}}
\alpha_{_{\phi}}\phi \colon  \sum_{\phi \in \mathfrak{J}}\left|
\alpha_{_{\phi}}\right|<\infty\biggr\}.
\end{align*}
The natural isomorphism between the Banach algebra $\ell_1(\mathfrak{J})$
and  $A_{\mathfrak{J}}(G)$ defines a norm on this latter space. We denote
this norm by $\|\cdot\|_{A_{\mathfrak{J}}}$.
\end{definition}
When $\mathfrak{J}=\widehat{G}$, $A_{\mathfrak{J}}(G)$ can  be identified
with the  Fourier algebra  $A(G^\ap)$ (as defined, for instance, in
\cite[Section 1.2.3]{rudin90})  on  the Bohr compactificaction $G^\ap$
of $G$.  Note  that  $A_{\mathfrak{J}}(G)$ may be  strictly  contained
in $\ap_{\mathfrak{J}}(G)$, see \cite[Theorem 4.6.8]{rudin90}, the result
is originally due to Segal \cite{segal50}.

If $X$ is a set, a subset $A\subseteq X$ is an \emph{$n$--subset} if its
cardinality is $n\in \N$. A subset $D$ of a metric space $(X,\rho)$
is \emph{$\varepsilon$-dense} if for every $x\in X$ there exists $d_x\in
D$ such that $\rho(x,d_x)<\varepsilon$.

\section{Approximately Bohr--dense subsets}\label{sec:appdense}

We aim to construct sets where particular families of functions are uniquely determined.
\begin{definition}\label{def:uniq}
Let $\Lambda$ be a subset of a topological group $G$  and let  $\mathcal{A}$ be a
vector subspace of $C(G,\C)$. We say that $\Lambda$ is a \emph{set of
uniqueness} for $\mathcal{A}$ if  whenever  $f\in \mathcal{A}$ satisfies
$\restr{f}{\Lambda}=0$, then $f=0$.
\end{definition}
We first consider sets of approximate uniqueness.
\begin{definition}\label{def:ss}
Let $G$ be a topological group. Fix $I\subseteq G$ and
$\mathfrak{J}\subseteq C(G,\T)$. The subset $\Lambda\subseteq I$ is an
\emph{$(\J,I,\vep)$--sampling set} if
for every  $f\in A_{\mathfrak{J}}(G)$,
\[ \left\|\restr{f}{I}\right\|_{\infty}\leq \vep\cdot \|f\|_{A_{\mathfrak{J}}}+
\left\|\restr{f}{\Lambda}\right\|_\infty.\]
\end{definition}
The following sets contain enough elements to approximate the values of a
given family $F$ of functions in $C(G,\T)$.
\begin{definition}\label{def:matc}
Let $G$ be a topological group. Fix $F\subseteq C(G,\T)$, $I\subseteq G$
and $\vep>0$. The subset  $\Lambda\subseteq G$ is
an \emph{$(F,I,\vep)$--matching set} if for every
$a\in I$ there exists $x_a\in \Lambda$ such that
$\phi(x_a)\in \phi(a) \cdot\mathcal{V}_{\vep}$ for every $\phi\in F$.
\end{definition}
Notation below provides a compact expression of the property defining matching sets.
\begin{definition}\label{polars}
For $I\subseteq G$, $a\in G$, $\Delta\subseteq C(G,\T)$ and $\varepsilon >0$, we define
\begin{enumerate}
\item ${\displaystyle N^{\vartriangleright}(I,\varepsilon)=\left\{\phi\in
C(G,\T)\colon \phi(I)\subseteq \mathcal{V}_\varepsilon\right\}}$, and
\item ${\displaystyle N^{\vartriangleleft}(\Delta,\varepsilon,a)=\left\{t\in
G \colon \phi(t)\in \phi(a)\cdot\mathcal{V}_\varepsilon
\mbox{ for all } \phi\in \Delta\right\}}$. For $a=0$ we write
$N^{\vartriangleleft}(\Delta,\varepsilon)$.
\end{enumerate}
\end{definition}
Observe that $N^{\vartriangleleft}(\Delta,\varepsilon,a)=
a+N^{\vartriangleleft}(\Delta,\varepsilon)$ in case $\Delta$ consists of homomorphisms.
In these terms, the subset $\Lambda\subseteq G$ is an
\emph{$(F,I,\vep)$-matching set} if $\Lambda \cap \mfteiba \neq \emptyset$
for every $a\in I$.
We now see that matching sets, even when slightly thickened,  are sampling sets.
\begin{proposition}\label{mat->samp}
Let $G$ be a topological  group, $\Lambda\subseteq I\subseteq G$
and $\mathfrak{J}\subseteq C(G,\T)$. Suppose that there exists $F\subseteq C(G,\T) $ that
satisfies
\begin{enumerate} \item $\Lambda$ is an
$(F,I,\vep)$--matching set, and \item $\mathfrak{J}\subseteq F\cdot
N^{\vartriangleright}\left(I,\varepsilon\right)$.
\end{enumerate}
Then $\Lambda$ is a $(\J,I,(6\pi+2)\vep)$--sampling set.
\end{proposition}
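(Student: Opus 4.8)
The plan is to establish the sampling inequality of Definition~\ref{def:ss} pointwise at each $a\in I$ and then pass to the supremum. First I would fix $f\in A_{\mathfrak{J}}(G)$ and write it as $f=\sum_{\psi\in\mathfrak{J}}\alpha_\psi\,\psi$ with $\sum_{\psi}|\alpha_\psi|=\|f\|_{A_{\mathfrak{J}}}$; since the coefficients are $\ell_1$ the series converges uniformly, which legitimises the term-by-term estimates below. The target is, for every $a\in I$, to exhibit one point $x_a\in\Lambda$ with
\[ |f(a)|\le \left\|\restr{f}{\Lambda}\right\|_\infty+(6\pi+2)\,\vep\,\|f\|_{A_{\mathfrak{J}}},\]
for then taking the supremum over $a\in I$ gives exactly the inequality $\left\|\restr{f}{I}\right\|_\infty\le (6\pi+2)\vep\,\|f\|_{A_{\mathfrak{J}}}+\left\|\restr{f}{\Lambda}\right\|_\infty$ that defines a $(\mathfrak{J},I,(6\pi+2)\vep)$--sampling set.

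The core of the argument is a factorisation supplied by the two hypotheses. Using (2), each generator splits as $\psi=\phi_\psi\cdot\eta_\psi$ with $\phi_\psi\in F$ and $\eta_\psi\in N^{\vartriangleright}(I,\vep)$, so that $\eta_\psi(t)\in\mathcal{V}_\vep$ for \emph{every} $t\in I$. Using (1), and crucially that $\Lambda\subseteq I$, I would choose for a fixed $a\in I$ a sample $x_a\in\Lambda\subseteq I$ with $\phi(x_a)\in\phi(a)\cdot\mathcal{V}_\vep$ for all $\phi\in F$, in particular for every $\phi_\psi$. The identity
\[ \psi(a)-\psi(x_a)=\phi_\psi(a)\bigl(\eta_\psi(a)-1\bigr)+\bigl(\phi_\psi(a)-\phi_\psi(x_a)\bigr)+\phi_\psi(x_a)\bigl(1-\eta_\psi(x_a)\bigr)\]
then decomposes the discrepancy into three pieces. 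Since $|\phi_\psi(a)|=|\phi_\psi(x_a)|=1$ and, by the elementary bound $|e^{2\pi i t}-1|\le 2\pi|t|$, membership in $\mathcal{V}_\vep$ of $\eta_\psi(a)$, of $\eta_\psi(x_a)$ (both because $a,x_a\in I$) and of $\phi_\psi(x_a)\phi_\psi(a)^{-1}$ each contributes at most $2\pi\vep$, I obtain $|\psi(a)-\psi(x_a)|\le 6\pi\vep$.

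To finish I would sum against the coefficients: absolute summability gives $|f(a)-f(x_a)|\le\sum_\psi|\alpha_\psi|\,|\psi(a)-\psi(x_a)|\le 6\pi\vep\,\|f\|_{A_{\mathfrak{J}}}$, and since $x_a\in\Lambda$ we have $|f(x_a)|\le\left\|\restr{f}{\Lambda}\right\|_\infty$; the triangle inequality then yields the displayed pointwise bound, comfortably within the claimed constant $(6\pi+2)\vep$. The step I expect to be the true crux is the simultaneous control of the two $\eta_\psi$-error terms at both $a$ and $x_a$: this only works because the matching point $x_a$ is forced to lie inside $I$, where the factor $\eta_\psi$ is uniformly close to $1$. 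Everything else is the bookkeeping of a three-term telescoping estimate summed against an $\ell_1$ sequence, and presents no genuine obstacle; the mild looseness in the constant ($6\pi$ versus the stated $6\pi+2$) simply reflects the slack in the chord estimate $|e^{2\pi i t}-1|\le 2\pi|t|$.
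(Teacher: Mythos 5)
Your proof is correct, and its engine is the same as the paper's: factor each generator through $F$ and $N^{\vartriangleright}(I,\vep)$ using hypothesis (2), pick a matching point $x_a\in\Lambda\subseteq I$ using hypothesis (1), and control the resulting discrepancies by the chord bound $|e^{2\pi i t}-1|\le 2\pi|t|$. The one genuine difference is organisational: the paper first replaces $f$ by a finite trigonometric polynomial $\sum_{j=1}^n\alpha_j\phi_j$ with $\big\|f-\sum_{j}\alpha_j\phi_j\big\|_\infty<\vep\,\|f\|_{A_{\mathfrak{J}}}$ and then runs a six-term triangle inequality, paying that approximation error twice (once at $z$, once at $x_z$); you instead keep the full $\ell_1$ expansion $f=\sum_\psi\alpha_\psi\psi$ and sum your three-term telescoping identity against the coefficients, which is legitimate by absolute (hence uniform) convergence and removes the approximation step entirely. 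That is exactly what buys you the sharper constant $6\pi\vep$ in place of $(6\pi+2)\vep$, so your closing diagnosis is slightly off: the gap is not slack in the chord estimate (both proofs use the identical bound $2\pi\vep$ three times) but the two terms $\vep\,\|f\|_{A_{\mathfrak{J}}}$ that the paper's finite approximation contributes and your argument never incurs. Since the proposition only claims the constant $(6\pi+2)$, your stronger pointwise estimate more than suffices.
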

\begin{proof}
For $f\in A_{\mathfrak{J}}(G)$, let $(\phi_j)_{j=1}^n\subseteq \mathfrak{J}$
and $(\alpha_j)_{j=1}^n\subseteq\C$ be such that
$\big\|f-\sum_{j=1}^n \alpha_j\phi_j\big\|_\infty<\varepsilon\cdot
\left\|f\right\|_{A_{\mathfrak{J}}}$. Let $z\in
I$ be fixed. Given that $\mathfrak{J}\subseteq F\cdot
N^{\vartriangleright}\left(I,\varepsilon\right)$, we can choose $\psi_j\in
F$ and $\kappa_j\in N^{\vartriangleright}\left(I,\varepsilon\right)$
with $\phi_j=\psi_j\cdot \kappa_j$. Since $\kappa_j \in
N^{\vartriangleright}\left(I,\varepsilon\right)$ implies
$|\kappa_j(z)-1|<2 \pi \varepsilon$, we obtain
\begin{equation*}
\Bigg|\sum_{j=1}^n \alpha_j\phi_j(z)-\sum_{j=1}^n
\alpha_j\psi_j(z)\Bigg|\leq \sum_{j=1}^n |\alpha_j|\cdot
\left|\psi_j(z)\right|\cdot \left|\kappa_j(z)-1\right|<2\pi
\varepsilon\cdot \|f\|_{A_{\mathfrak{J}}}.  \end{equation*} On the
other hand, since $\Lambda$ is an $(F,I,\varepsilon)$--matching set,
there exists $x_z\in \Lambda$ such that $\psi(x_z)\in \psi(z)\cdot
\mathcal{V}_\varepsilon$ for every $\psi\in F$. Hence
\begin{equation*}
\Bigg|\sum_{j=1}^n \alpha_j\psi_j(z)-\sum_{j=1}^n
\alpha_j\psi_j(x_z)\Bigg|\leq \sum_{j=1}^n
|\alpha_j|\cdot\left|\psi_j(z)-\psi_j(x_z)\right|\leq 2\pi
\varepsilon\cdot \|f\|_{A_{\mathfrak{J}}}.
\end{equation*}
Therefore, from
\begin{align*}
\left|f(z)\right|&\leq \Bigg|f(z)-\sum_{j=1}^n
\alpha_j\phi_j(z)\Bigg|+\Bigg|\sum_{j=1}^n \alpha_j\phi_j(z)-\sum_{j=1}^n
\alpha_j\psi_j(z)\Bigg|\\
&+\Bigg|\sum_{j=1}^n \alpha_j\psi_j(z)-\sum_{j=1}^n
\alpha_j\psi_j(x_z)\Bigg|+\Bigg|\sum_{j=1}^n
\alpha_j\psi_j(x_z)-\sum_{j=1}^n \alpha_j\phi_j(x_z)\Bigg|\\
&+\Bigg|\sum_{j=1}^n \alpha_j\phi_j(x_z)-f(x_z)\Bigg|+\left|f(x_z)\right|,
\end{align*}
we obtain
\begin{align*}
|f(z)|&<\varepsilon\cdot \|f\|_{A_{\mathfrak J}}+2\pi\varepsilon\cdot
\|f\|_{A_{\mathfrak J}}
+2\pi\varepsilon\cdot \|f\|_{A_{\mathfrak J}}+2\pi\varepsilon\cdot \|f\|_{A_{\mathfrak J}}
+\varepsilon\cdot \|f\|_{A_{\mathfrak J}}+\left\|\restr{f}{\Lambda}\right\|_\infty\\
&=(6\pi +2)\varepsilon\cdot \|f\|_{A_{\mathfrak
J}}+\left\|\restr{f}{\Lambda}\right\|_\infty.
\end{align*}
\end{proof}
As it may be expected,  Proposition \ref{mat->samp} yields density
results when  applied to all finite subsets of a concrete subspace of
$C(G,\T)$.
\begin{corollary}\label{mat->dense}
Let $G$ be a
topological group, $\mathfrak{J}\subseteq C(G,\T)$  and  $\Lambda\subseteq
G$. Suppose that $\Lambda$ is an $(F,G,\varepsilon)$--matching set for
every finite subset $F\subseteq \J$   and every $\varepsilon>0$. Then $\Lambda$ is a  set of uniqueness for $\ap_{\mathfrak{J}}(G)$.
\end{corollary}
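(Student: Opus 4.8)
The plan is to reduce the uniqueness statement to the sampling inequality of Proposition \ref{mat->samp} by a density-plus-$\varepsilon$ argument. Starting from an arbitrary $f\in\ap_{\mathfrak{J}}(G)$ with $\restr{f}{\Lambda}=0$, I would fix $\varepsilon>0$ and use that $f$ lies in the uniform closure of $\mathrm{span}(\mathfrak{J})$: choose a trigonometric polynomial $g=\sum_{j=1}^{n}\alpha_j\phi_j$ with each $\phi_j\in\mathfrak{J}$ and $\|f-g\|_\infty<\varepsilon$. The finite set $F=\{\phi_1,\dots,\phi_n\}\subseteq\mathfrak{J}$ and the number $M=\|g\|_{A_F}$ are now fixed, and $g\in A_F(G)$.

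The second step is to place ourselves under the hypotheses of Proposition \ref{mat->samp} with $I=G$ and with $F$ itself in the role of $\mathfrak{J}$. The matching-set hypothesis is available by assumption for every $\delta>0$, and the containment $F\subseteq F\cdot N^{\vartriangleright}(G,\delta)$ is immediate: the constant function $1=e^{2\pi i\cdot 0}$ maps $G$ into $\mathcal{V}_\delta$, hence $1\in N^{\vartriangleright}(G,\delta)$ and $\phi=\phi\cdot 1$ for every $\phi\in F$. Proposition \ref{mat->samp} then says that $\Lambda$ is an $(F,G,(6\pi+2)\delta)$--sampling set, so applying its defining inequality to $g\in A_F(G)$ gives
\[
\|g\|_\infty \leq (6\pi+2)\,\delta\, M + \left\|\restr{g}{\Lambda}\right\|_\infty .
\]

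The third step absorbs the sampling term using $\restr{f}{\Lambda}=0$: on $\Lambda$ we have $g=g-f$, so $\|\restr{g}{\Lambda}\|_\infty\leq\|g-f\|_\infty<\varepsilon$. Choosing $\delta$ small enough that $(6\pi+2)\delta M<\varepsilon$ — which is legitimate because $M$ has already been fixed — forces $\|g\|_\infty<2\varepsilon$, whence $\|f\|_\infty\leq\|f-g\|_\infty+\|g\|_\infty<3\varepsilon$. Since $\varepsilon>0$ is arbitrary, $f\equiv 0$, which is the assertion.

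The argument is essentially routine once the quantifier order is respected, and the one point I would flag as the genuine subtlety — rather than a real obstacle — is exactly this order: the matching accuracy $\delta$ must be selected \emph{after} the approximant $g$, because the sampling error is proportional to $M=\|g\|_{A_F}$, a quantity that is only controlled once $g$, and with it the finite set $F$, has been fixed. The hypothesis supplies precisely the needed freedom, namely that $\Lambda$ be a matching set for \emph{every} finite $F\subseteq\mathfrak{J}$ and \emph{every} $\varepsilon>0$, and this is what makes the estimate close.
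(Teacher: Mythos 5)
Your proposal is correct and follows essentially the same route as the paper's own proof: approximate $f$ by a $\mathfrak{J}$--trigonometric polynomial, apply Proposition \ref{mat->samp} to the finite set $F$ of its frequencies with a matching accuracy chosen \emph{after} the approximant is fixed (the paper takes $\tilde{\varepsilon}=\varepsilon/\sum_{j}|\alpha_j|$, which is exactly your choice of $\delta$ with $(6\pi+2)\delta M<\varepsilon$), use $\restr{f}{\Lambda}=0$ to bound the sampled term, and let $\varepsilon\to 0$. The only difference is cosmetic: you spell out hypothesis (2) of Proposition \ref{mat->samp} via the constant function $1\in N^{\vartriangleright}(G,\delta)$, a point the paper leaves implicit.
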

\begin{proof}
Let $f\in \ap_{\mathfrak J}(G)$ with $\restr{f}{\Lambda}=0$ and let
$0<\varepsilon<1$ be fixed. Find  a $\mathfrak{J}$--trigonometric
polynomial $P_f=\sum_{j=1}^n \alpha_j \psi_{j}$, with
$(\alpha_j)_{j=1}^n\subseteq \C$ and $(\psi_j)_{j=1}^n\subseteq
\mathfrak{J}$, such that $\left\|f-P_f\right\|_\infty<\varepsilon$.
This implies that $\left\|\restr{P_f}{\Lambda}\right\|_{\infty}\leq \varepsilon$. With $
\tilde{\varepsilon}=\varepsilon/\sum_{j=1}^{n}|\alpha_j|$ and
$F=\{\psi_1,\dots,\psi_n\}$ we obtain from Proposition \ref{mat->samp}
that $\Lambda$ is an $(F,G,(6\pi+2)\tilde{\varepsilon})$--sampling
set. Given that $P_f\in A_{F}(G)$, we have as a consequence
that
\[
\|P_f\|_\infty\leq  (6\pi+2)\tilde{\varepsilon}\cdot
{\sum_{j=1}^{n}|\alpha_j|}+\varepsilon\leq (6\pi +3) \varepsilon.
\]
Since $\|f-P_f\|_\infty\leq \varepsilon$, we deduce that
$\|f\|_\infty<(6\pi +4)\varepsilon$. We conclude that $f=0$, for
$\varepsilon$ was arbitrary.
\end{proof}
In the case of almost periodic
functions, every continuous function on $G^\ap$  coincides with an almost
periodic function on $G$, we obtain therefore that $\Lambda$ is dense in
$G^\ap$.
\begin{corollary}\label{cor:mat->dense} Let $G$ be an Abelian
topological group and  $\Lambda\subseteq G$. Suppose that $\Lambda$ is
an $(F,G,\varepsilon)$-matching set for every finite subset $F\subseteq
\widehat{G}$   and every $\varepsilon>0$. Then  $\Lambda$ is a set of
uniqueness for $ \ap(G)$. In particular $\Lambda$ is dense in $G^\ap$.
\end{corollary}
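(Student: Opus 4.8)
The plan is to deduce this corollary directly from Corollary \ref{mat->dense}, specialized to the case $\J=\widehat{G}$, and then to translate the conclusion ``set of uniqueness for $\ap(G)$'' into the density statement via the standard correspondence between $\ap(G)$ and $C(G^\ap)$. First I would observe that since $\Lambda$ is an $(F,G,\varepsilon)$--matching set for every finite $F\subseteq\widehat{G}$ and every $\varepsilon>0$, the hypotheses of Corollary \ref{mat->dense} hold with $\J=\widehat{G}$; therefore $\Lambda$ is a set of uniqueness for $\ap_{\widehat{G}}(G)=\ap(G)$. This gives the first assertion essentially for free, and the only real content of the corollary is the ``In particular'' clause about Bohr--density.

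For the density statement, I would argue by contradiction. Suppose $\Lambda$ is \emph{not} dense in $G^\ap$. Then there is a point $p\in G^\ap$ and an open neighbourhood of $p$ in $G^\ap$ disjoint from the closure of $\Lambda$. Since $G^\ap$ is a compact Hausdorff space whose algebra of continuous functions is (the continuous extension of) $\ap(G)$, Urysohn's lemma produces a nonzero $g\in C(G^\ap)$ that vanishes on the closure of $\Lambda$ but not at $p$. Restricting $g$ to $G$ (using the dense embedding $G\hookrightarrow G^\ap$) yields, as recalled in the paragraph preceding Definition \ref{ap1(g)}, a nonzero almost periodic function $f=g|_G\in\ap(G)$ with $\restr{f}{\Lambda}=0$. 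This contradicts the fact that $\Lambda$ is a set of uniqueness for $\ap(G)$, and the contradiction forces $\Lambda$ to be dense in $G^\ap$.

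The step requiring the most care is the passage from a function vanishing on $\Lambda\subseteq G$ to one vanishing on the closure $\overline{\Lambda}$ taken \emph{inside} $G^\ap$: one must separate $p$ from $\overline{\Lambda}^{\,G^\ap}$ rather than merely from $\Lambda$ itself, and then check that the restriction of the separating function to $G$ indeed vanishes on all of $\Lambda$ and is not identically zero. Both points follow from the density of $G$ in $G^\ap$ together with continuity of $g$: continuity guarantees that $g$ vanishing on $\overline{\Lambda}^{\,G^\ap}$ forces $\restr{f}{\Lambda}=0$, and $g(p)\neq 0$ together with the density of $G$ forces $f\not\equiv 0$. No genuine obstacle arises beyond bookkeeping with these closures, since everything rests on the already--established identification of $\ap(G)$ with $C(G^\ap)$ and on the compactness and Hausdorffness of $G^\ap$.
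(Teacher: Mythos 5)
Your proposal is correct and follows essentially the same route as the paper: the uniqueness assertion comes from Corollary \ref{mat->dense} with $\J=\widehat{G}$, and the density assertion comes from the identification of $\ap(G)$ with $C(G^\ap,\C)$ together with Urysohn's Lemma applied to a point outside $\overline{\Lambda}^{\,G^\ap}$, yielding a nonzero almost periodic function vanishing on $\Lambda$ and hence a contradiction. Your write-up is in fact slightly more careful than the paper's (which compresses the closure bookkeeping and the nonvanishing of the restriction into one sentence), but the argument is identical in substance.
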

\begin{proof}
The only difference with Corollary \ref{mat->dense} resides in the
density statement. The Gelfand Representation identifies $\ap(G)$
with $C(G^\ap,\C)$. If $\Lambda$ is not dense in $G^\ap$, by Urysohn's
Lemma there would be a nonconstant  function $f\colon G^\ap \to \C$
that vanishes on $\Lambda$ which is impossible since $f$ is determined
by its values on $\Lambda$.
\end{proof}

To be able to cover simultaneously the cases of $\R$ and $\Z$
we state our next results in the context of locally compact groups with
Haar measure.

Our next objective is to estimate the probability of selecting a set that is  $(F,\{a
 \},\varepsilon)$-matching set for a fixed $a\in G$.

We start by introducing some notation.
\begin{definition}\label{ui} For $I\subseteq G$, $n\in \N$, $a\in G$ and $\varepsilon >0$, we define
\begin{align*}
\mathfrak{P}_{I,a,n,\varepsilon}&=\left\{F\subseteq
C(G,\T)\colon |F|=n\mbox{ and } \lambda_\g\left(
N^{\vartriangleleft}(F,\varepsilon,a)\cap I\right) \geq \varepsilon^n
\lambda_\g(I)\right\}, \text{\ and}\\
\mathfrak{P}_{I,n,\varepsilon}&=\left\{F\subseteq
C(G,\T)\colon |F|=n\mbox{ and } \lambda_\g\left(
N^{\vartriangleleft}(F,\varepsilon,a)\cap I\right)
\geq \varepsilon^n \lambda_\g(I) \mbox{ for all } a\in
G\right\}.\end{align*} \end{definition}
The sets $\mathfrak{P}_{I,a,n,\varepsilon}$ are formed by
$n$--subsets of $C(G,\T)$ whose elements are functions $\phi$ that map some point of $I$ into
$\phi(a)\cdot\mathcal V_{\varepsilon}$ with probability at least $\varepsilon^n$.

\begin{definition}\label{a:def}
Let $G$ be a locally compact Abelian group, $I \subseteq G$ and $a\in
G$. For $\Delta\subseteq C(G,\T)$,  $n, \ell \in \N$, and  $\varepsilon>0$,
we define $\mathcal{A}_{a,\Delta,n,\ell,\varepsilon,I}$ to be the set of all $\ell$--subsets $\Lambda\subseteq I$ such that, for every $F\subseteq\Delta$ with $F\in\mathfrak{P}_{I,a,n,\varepsilon}$,
\[
\Lambda \cap N^{\vartriangleleft}\left(F,\varepsilon,a\right)\neq \emptyset.
\]
\end{definition}
The set $\mathcal{A}_{a,\Delta,n,\ell,\varepsilon,I}$ is made of all
the $\ell$--subsets of $I$ containing some point which is mapped
by all functions $\phi\in F\subseteq\Delta$ into $\phi(a)\cdot\mathcal{V}_{\varepsilon}$.
In what follows we regard the sets
$\mathcal{A}_{a,\Delta,n,\ell,\varepsilon,I}$ as events in the probability
space determined by the restriction of $\lambda_{\g^\ell} $ to $I^\ell$,
see the remarks at the end
of Section \ref{notationandterminology}.

The sets $\mathfrak{P}_{I,a,n,\varepsilon}$ and
$\mathcal{A}_{a,\Delta,n,\ell,\varepsilon,I}$ are tailored to facilitate
these estimates, as the following lemma shows.
\begin{lemma}\label{aest}
Let $I\subseteq G$  be a subset of positive Haar measure
and let $\Delta\subseteq C(G,\T)$ be an $N$--subset. Consider as well
$a\in G$, $\vep>0$ and $n\in \N$.
Then
\begin{align*}
\P\left(\mathcal{A}_{a,\Delta,n,\ell,\varepsilon,I}^{\mathbf{c}}\right)&\leq
\binom{N}{n}\left(1-\vep^n\right)^\ell\leq \left(\frac{Ne}{n}\right)^
n\left(1-\vep^n\right)^\ell
\end{align*}
\end{lemma}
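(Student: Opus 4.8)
The plan is to bound the probability of the \emph{complement} event by a union bound over the finitely many candidate subsets $F$. First I would unwind the definitions. By Definition \ref{a:def}, an $\ell$-subset $\Lambda\subseteq I$ fails to belong to $\mathcal{A}_{a,\Delta,n,\ell,\varepsilon,I}$ precisely when there is some $F\subseteq\Delta$ with $F\in\mathfrak{P}_{I,a,n,\varepsilon}$ (in particular $|F|=n$) for which $\Lambda\cap N^{\vartriangleleft}(F,\varepsilon,a)=\emptyset$. Writing $E_F=\{\Lambda:\Lambda\cap N^{\vartriangleleft}(F,\varepsilon,a)=\emptyset\}$, this expresses $\mathcal{A}_{a,\Delta,n,\ell,\varepsilon,I}^{\mathbf{c}}$ as the union of the events $E_F$ over all $n$-subsets $F\subseteq\Delta$ lying in $\mathfrak{P}_{I,a,n,\varepsilon}$, so subadditivity of $\P$ yields $\P(\mathcal{A}_{a,\Delta,n,\ell,\varepsilon,I}^{\mathbf{c}})\le\sum_F\P(E_F)$.

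Next I would estimate $\P(E_F)$ for a single $F\in\mathfrak{P}_{I,a,n,\varepsilon}$. Working in the probability space described at the end of Section \ref{notationandterminology}, namely $I^\ell$ equipped with $\lambda_{G^\ell}$ renormalized by $\lambda_G(I)^\ell$, the event $E_F$ corresponds to the set of tuples $(x_1,\dots,x_\ell)\in I^\ell$ with $x_i\notin N^{\vartriangleleft}(F,\varepsilon,a)$ for every $i$; that is, to $\big(I\setminus N^{\vartriangleleft}(F,\varepsilon,a)\big)^\ell$. Because $\lambda_{G^\ell}$ is a product measure, its renormalized mass factorizes, giving $\P(E_F)=(1-p_F)^\ell$ with $p_F=\lambda_G\big(N^{\vartriangleleft}(F,\varepsilon,a)\cap I\big)/\lambda_G(I)$. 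The membership $F\in\mathfrak{P}_{I,a,n,\varepsilon}$ is exactly the inequality $p_F\ge\varepsilon^n$, so $\P(E_F)\le(1-\varepsilon^n)^\ell$.

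Finally I would count. The subsets $F$ appearing in the union are $n$-subsets of the $N$-element set $\Delta$, of which there are at most $\binom{N}{n}$ (we discard the constraint $F\in\mathfrak{P}_{I,a,n,\varepsilon}$, which only shrinks the index set and hence is harmless for an upper bound). Combining the three steps gives $\P(\mathcal{A}_{a,\Delta,n,\ell,\varepsilon,I}^{\mathbf{c}})\le\binom{N}{n}(1-\varepsilon^n)^\ell$, and the second stated inequality follows from the elementary estimate $\binom{N}{n}\le N^n/n!\le(Ne/n)^n$, using $n!\ge(n/e)^n$.

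As for difficulty: every individual step is routine, and the one point deserving care is the factorization in the second paragraph — making precise that ``the $\ell$ sample points are chosen independently'' is encoded in the product structure of $\lambda_{G^\ell}$ together with the permutation-invariant identification of an $\ell$-subset with a point of $I^\ell$. Once that identification is invoked, the product measure does all the work, so I expect no genuine obstacle beyond bookkeeping.
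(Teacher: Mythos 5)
Your proof is correct and follows essentially the same route as the paper's: the complement is written as the union, over $n$-subsets $F\subseteq\Delta$ lying in $\mathfrak{P}_{I,a,n,\varepsilon}$, of the events $\bigl[\bigl(N^{\vartriangleleft}(F,\varepsilon,a)\cap I\bigr)^{\mathbf{c}}\bigr]^{\ell}$, each of which has probability at most $(1-\varepsilon^n)^{\ell}$ by the product structure of the measure and the defining inequality of $\mathfrak{P}_{I,a,n,\varepsilon}$, and the union bound over at most $\binom{N}{n}$ subsets finishes, with the second inequality being the standard binomial estimate. No discrepancies to report.
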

\begin{proof}
The second inequality is a well--known estimate of binomial
coefficients. For the first inequality, we observe  that \[
\mathcal{A}_{a,\Delta,n,\ell,\varepsilon,I}^{\mathbf{c}}=\bigcup\Big\{\big[\left(\mfteiba
\cap I\right)^{\mathbf{c}}\big]^\ell:F\subseteq \Delta,
F\in \mathfrak{P}_{I,a,n,\varepsilon}\Big\},\]
where $\mathbb{P}\Big(\big[\left(\mfteiba \cap
I\right)^{\mathbf{c}}\big]^\ell\Big)\leq (1-\varepsilon^n)^\ell$
since $\lambda_{\g}  \bigl(\left(\mfteiba \cap
I\right)^{\mathbf{c}}\bigr)\leq \lambda_\g(I)(1-\varepsilon^n)$
for every $F\in \mathfrak{P}_{I,a,n,\varepsilon}$. The inequality
follows because there are $\binom{N}{n}$ $n$--subsets of $\Delta$.
\end{proof}

In the following definition we introduce the events
$\mathcal{B}_{a,\Delta^\ast,n,\ell^\ast,\varepsilon,I^\ast}$ which
correspond to the determining sets we are constructing.
\begin{definition}\label{def:bs}
Let $G$ be a LCA group and let $I^\ast=(I_k)_{k\in \N}$ and
$\Delta^\ast=(\Delta_k)_{k\in \N}$ be sequences of
subsets of $G$ and $C(G,\T)$, respectively.
Let as well a sequence $\ell^\ast=(\ell_k)_{k\in \N}$ of positive integers, $a\in G$, $n\in \N$ and $\varepsilon>0$ be given. We define the event
       \[\mathcal{B}_{a,\Delta^\ast,n,\ell^\ast,\varepsilon,I^\ast}=\left\{
       (\Lambda_k)_{k\in \N}\colon \mbox{There exists  $N\in \N$ such that }
       \Lambda_k\in \mathcal{A}_{a,\Delta_k,n,\ell_k,\varepsilon,I_k}
       \mbox{ for } k\geq N\right\}
.\]
\end{definition}
We now estimate the probability of the events
$\mathcal{B}_{a,\Delta^\ast,n,\ell^\ast,\varepsilon,I^\ast}$ regarded as
events in the probability space $\prod_k I_k^{\ell_k}$, where each factor
is assumed to carry
the probability measure induced by the restriction of Haar
measure on $G^{\ell_k}$. It then follows that given
a sequence of intervals $(I_k)_{k\in \N}$ and a sequence
$(\Delta_k)_{k\in \N}$ of finite subsets of $C(G,\T)$, a randomly
chosen sequence of $\ell_k$-subsets of the $I_k$'s belongs to
$\mathcal{B}_{a,\Delta^\ast,n,\ell^\ast,\varepsilon,I^\ast}$ with
probability one as long as the growth of $\ell_k$ is  large enough
(where enough is controlled by $\log k$ and the cardinality of the
$\Delta_k$'s).
\begin{lemma}
\label{firtsglobal}
Consider the  sequences
\begin{enumerate}
\item $I^\ast=(I_k)_{k\in \N}$,
with $I_k\subseteq G$ of nonzero Haar measure,
\item $\Delta^\ast=(\Delta_k)_{k\in \N}$, with
$\Delta_k\subseteq C(G,\T)$ and $L_k:=|\Delta_k|<\infty$, and
\item $\ell^\ast=(\ell_k)_{k\in \N}\subseteq \N$.
\end{enumerate}
Let $n\in \N$ and $\varepsilon>0$. If  there are $\gamma >0$ and $k_0\in \N$ such that
\begin{equation}\label{condition1}
\frac{-n\log L_k}{\varepsilon^n}+\ell_k>\frac{-(1+\gamma)\log k}{\log(1-\varepsilon^n)}
\end{equation}
for every $k\geq k_0$, then
$\P\left(\mathcal{B}_{a,\Delta^\ast,n,\ell^\ast,\varepsilon,I^\ast}\right)=1$
for every $a\in G$.
\end{lemma}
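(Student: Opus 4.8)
The plan is to apply the Borel--Cantelli Lemma to the complementary events $\mathcal{A}_{a,\Delta_k,n,\ell_k,\varepsilon,I_k}^{\mathbf{c}}$ in the product probability space $\prod_k I_k^{\ell_k}$. By definition, the event $\mathcal{B}_{a,\Delta^\ast,n,\ell^\ast,\varepsilon,I^\ast}$ occurs precisely when $\Lambda_k\in\mathcal{A}_{a,\Delta_k,n,\ell_k,\varepsilon,I_k}$ for all large $k$, that is, when only finitely many of the complementary events occur. Thus it suffices to show that $\sum_k \P\big(\mathcal{A}_{a,\Delta_k,n,\ell_k,\varepsilon,I_k}^{\mathbf{c}}\big)<\infty$, for then the first Borel--Cantelli Lemma guarantees that almost surely only finitely many complements occur, giving probability one for $\mathcal{B}$. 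Note that the bound is uniform in $a$ (Lemma \ref{aest} holds for every fixed $a\in G$ with the same right-hand side), so convergence for one summation argument yields the conclusion for every $a\in G$ simultaneously.

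First I would invoke Lemma \ref{aest} with $N=L_k$ to obtain, for each $k\geq k_0$,
\[
\P\big(\mathcal{A}_{a,\Delta_k,n,\ell_k,\varepsilon,I_k}^{\mathbf{c}}\big)
\leq \left(\frac{L_k e}{n}\right)^{n}\left(1-\varepsilon^n\right)^{\ell_k}.
\]
The goal is then to show that the hypothesis \eqref{condition1} forces these terms to be bounded by a convergent series, and the natural comparison is with $k^{-(1+\gamma)}$, whose sum converges since $\gamma>0$. The key step is therefore a direct manipulation: I would take logarithms of the upper bound and show that \eqref{condition1} implies it is eventually at most $-(1+\gamma)\log k$.

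Concretely, taking the logarithm of the right-hand side gives
\[
n\log\!\left(\frac{L_k e}{n}\right)+\ell_k\log\!\left(1-\varepsilon^n\right)
= n\log L_k + n\log\!\frac{e}{n} + \ell_k\log\!\left(1-\varepsilon^n\right).
\]
Since $\log(1-\varepsilon^n)<0$, rearranging \eqref{condition1} (after multiplying through by $\log(1-\varepsilon^n)$, which reverses the inequality) yields
\[
n\log L_k + \ell_k\log\!\left(1-\varepsilon^n\right) < -(1+\gamma)\log k,
\]
modulo the harmless additive constant $n\log(e/n)$. More carefully, the term $-n\log L_k/\varepsilon^n$ in \eqref{condition1} is chosen so that after multiplication by $\log(1-\varepsilon^n)$ it dominates $n\log L_k$; this is where the seemingly odd factor $1/\varepsilon^n$ enters, and checking that this absorption works cleanly is the main technical obstacle. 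Once the logarithm of the $k$-th term is bounded above by $-(1+\gamma)\log k + C$ for a constant $C$ independent of $k$, the $k$-th term is at most $e^{C}k^{-(1+\gamma)}$, and $\sum_k k^{-(1+\gamma)}<\infty$. Applying Borel--Cantelli completes the argument.

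The hard part will be verifying that the inequality $\varepsilon^n\log(1-\varepsilon^n)\leq \log(1-\varepsilon^n)$ (or the precise estimate actually needed to pass from the $1/\varepsilon^n$-weighted term to the unweighted $n\log L_k$) goes in the right direction, and that the constant $n\log(e/n)$ does not spoil summability — it does not, since it is independent of $k$ and merely contributes a finite multiplicative factor $e^{C}$ to every term. I would close by remarking that the estimate is uniform over $a\in G$, so the conclusion $\P(\mathcal{B}_{a,\Delta^\ast,n,\ell^\ast,\varepsilon,I^\ast})=1$ holds for every $a$.
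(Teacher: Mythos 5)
Your proposal is correct and takes essentially the same route as the paper's proof: both bound $\P\big(\mathcal{A}_{a,\Delta_k,n,\ell_k,\varepsilon,I_k}^{\mathbf{c}}\big)$ via Lemma \ref{aest}, use hypothesis \eqref{condition1} to dominate the series of these probabilities by a constant multiple of $\sum_k k^{-(1+\gamma)}$, and conclude with Borel--Cantelli, the uniformity in $a$ coming for free since the bound in Lemma \ref{aest} does not depend on $a$. The absorption step you flagged as the main obstacle does go through, but the inequality actually needed is $\log(1-\varepsilon^n)\le -\varepsilon^n$ (so that $\frac{-n\log L_k}{\varepsilon^n}\log(1-\varepsilon^n)\ge n\log L_k$, which is precisely what the paper invokes), whereas the version you tentatively wrote, $\varepsilon^n\log(1-\varepsilon^n)\le\log(1-\varepsilon^n)$, is equivalent to $\varepsilon^n\ge 1$ and hence false in the relevant range, so the correct form must be stated when completing the argument.
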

\begin{proof}
Since all indices except $k$ are fixed throughout the proof
we denote
by $\mathcal{A}_k$
the set $\mathcal{A}_{a,\Delta_{k}, n,\ell_{k},\varepsilon,I_{k}}$
and
for $k'\in \N$ we identify the set $\mathcal{A}_{k'}$ with the subset
$\prod_k \mathcal{X}_k \subseteq \prod_k I_k^{\ell_k}$
defined by $\mathcal{X}_{k'}:=\mathcal{A}_{k'}$ and
$\mathcal{X}_k:=I_k^ {\ell_k}$ for $k\neq k'$.
Note that the probabilities of the event $\mathcal{A}_{k'}$ in the
probability space  $\prod_{k} I_k^{\ell_k}$ and
in $I_{k'}^{\ell_{k'}}$ coincide.

Using this identification we have that
\[\mathcal{B}_{a,\Delta^\ast,n,\ell^\ast,\varepsilon,I^\ast}
=\bigcup_{N\in \N}\bigcap_{k\geq N} \mathcal{A}_{k}=\limsup \mathcal{A}_k.\]
We now see that
$\P(\mathcal{B}_{a,\Delta^\ast,n,\ell^\ast,\varepsilon,I^\ast}^{\mathbf{c}})=0$.
In fact, it follows from Lemma \ref{aest} that
\[
\sum_{k\ge k_0}\P\left(\mathcal{A}_k
^{\mathbf{c}}\right)
\leq \sum_{k\ge k_0} \left(\frac{L_k e}{n}\right)^n (1-\varepsilon^n)^{\ell_k}
=\left(\frac{e}{n}\right)^n\sum_{k\ge k_0} (1-\varepsilon^n)^{\frac{n
\log L_k}{\log(1-\varepsilon^n)}+\ell_k}.  \] Since
$\log(1-\varepsilon^n)<-\varepsilon^n$, we obtain
$\tfrac{n\log L_k}{\log(1-\varepsilon^n)}+\ell_k>\tfrac{-n\log
L_k}{\varepsilon^n}+\ell_k$, and by (\ref{condition1}) we get
\[
\sum_{k\ge k_0}\P\left(
\mathcal{A}_k^{\mathbf{c}}\right)
\leq \left(\frac{e}{n}\right)^n \sum_{k\ge
k_0} (1-\varepsilon^n)^{\frac{-n\log
L_k}{\varepsilon^n}+\ell_k} \leq \sum_{k\ge k_0}
\left[(1-\varepsilon^n)^{\frac{-(1+\gamma)}{\log(1-\varepsilon^n)}}\right]^{\log
k}, \] which is a convergent series of the form $\sum_{k\geq k_0}
x^{\log k}$ with $|x|<\tfrac 1 e$.
The Borel--Cantelli Lemma then implies
\[\P(\mathcal{B}_{a,\Delta^\ast,n,\ell^\ast,\varepsilon,I^\ast}^{\mathbf{c}})=
\P\left(\left(\limsup\mathcal{A}_k\right)^{\mathbf{c}}\right)=0,\]
as required.
\end{proof}
\section{Matching intervals  and characters}\label{sec:match}
In this section  $G=\R$ or $G=\Z$. Recall that for  $p\in \R[x]$,
$\psi_p(t)=e^{2\pi i p(t)}$, and for $\tau\in \R$, $\chi_\tau(x)=e^{2\pi
i \tau x}$.
\begin{definition}{\rm (\cite[Definition 2.74]{drmotichy97})}\label{discrep}
Let $\mathbf{x}\colon [0,+\infty)\to \R^n$ be a continuous function.
The \emph{continuous discrepancy of $ \mathbf{x}$ in $[0,T]$} is defined by
\[
D_T (\mathbf{x})= \sup_{\mathcal{V}
\subseteq \T^n }\left|\frac{1}{T}\int_0^T
\mathbf{1}_{\mbox{\tiny$\mathcal{V}$}}\left(e^{2\pi i
\mathbf{x}(t)}\right)\, dt-\lambda_{\T^n}(\mathcal{V})\right|,
\]
where  $\mathbf{1}_{\mbox{\tiny$\mathcal{V}$}}$ denotes the characteristic
function of the set $\mathcal{V}$,  $e^{2\pi i \mathbf{x}(t)}$ stands for
the vector $\left(e^{2\pi i x_1(t)},\ldots,e^{2\pi i x_n(t)}\right)\in
\T^n$  and the supremum is taken over all rectangles in $\T^n$ with sides
parallel to the axes.
\end{definition}
\begin{definition}{\rm (\cite[Definition 2.70 and Theorem 2.75]{drmotichy97})}\label{def:cwd}
A function $\mathbf{x}:[0,+\infty) \to\R^n
$ is \emph{continuously well distributed modulo 1} if it is continuous and $\displaystyle{\lim_{T\to\infty }
D_T (\mathbf{x}(t+\tau))=0}$ uniformly in $\tau$.
\end{definition}
We propose the following definition to make our notation lighter.
\begin{definition}\label{def:pli}
The polynomials $\{p_1,\ldots,p_n\}\subseteq \R[x]$ are \emph{strongly
linearly independent over $\Q$} if for each nonzero $(h_1,\ldots,h_n)\in
\Z^n$ the polynomial $\sum_{j=1}^n h_jp_j$ is nonconstant.

The functions $\{\psi_{p_1},\ldots,\psi_{p_n}\}$ are \emph{strongly
linearly independent over $\Q$} if $\{p_1,\ldots,p_n\}\subseteq
\R[x]$ are strongly linearly independent over $\Q$.
\end{definition}

Let $H$ be a Hamel basis of $\R$ over $ \Q$ and let $C_H$ denote
the constant polynomials with values in $H$. The polynomials
$\{p_1,\ldots,p_n\}\subseteq \R[x]$ are  strongly linearly
independent over $\Q$ if and only if the set $\{p_1,\ldots,p_n\}
\cup C_H$ is  linearly independent over $\Q$.
\begin{theorem}{\rm
(\cite[Corollary of Theorems 2.73 and 2.79]{drmotichy97})}\label{wdpoly}
If $\{p_1,\ldots,p_n\}\subseteq \R[x]$ are strongly linearly independent
over $\Q$, then the function $\mathbf{x}\colon [0,+\infty)\to \R^n$
defined by $\mathbf{x}(t)=\left(p_1(t),\ldots,p_n(t)\right)$ is
continuously well distributed modulo 1.
\end{theorem}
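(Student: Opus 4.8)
The plan is to pass from the geometric notion of discrepancy to exponential sums by way of Weyl's criterion, and then to control the resulting oscillatory integrals with a van der Corput estimate. First I would record the uniform Weyl criterion: a continuous $\mathbf{x}\colon[0,+\infty)\to\R^n$ is continuously well distributed modulo $1$ precisely when, for every nonzero $\mathbf{h}=(h_1,\dots,h_n)\in\Z^n$,
\[
\lim_{T\to\infty}\ \frac1T\int_0^T e^{2\pi i\langle \mathbf{h},\mathbf{x}(t+\tau)\rangle}\,dt=0
\]
uniformly in $\tau\ge 0$. The quantitative bridge that makes the ``uniformly in $\tau$'' clause manageable is the Erd\H{o}s--Tur\'an--Koksma inequality, which estimates $D_T(\mathbf{y})$ for any continuous $\mathbf{y}$ by $1/H$ plus a finite weighted sum of the integrals $\bigl|\frac1T\int_0^T e^{2\pi i\langle\mathbf{h},\mathbf{y}(t)\rangle}\,dt\bigr|$ over $0<\|\mathbf{h}\|_\infty\le H$. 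Applying this with $\mathbf{y}(t)=\mathbf{x}(t+\tau)$, choosing $H$ large and then letting $T\to\infty$, transfers a uniform-in-$\tau$ decay of the exponential integrals into a uniform-in-$\tau$ decay of $D_T(\mathbf{x}(\cdot+\tau))$.

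The key reduction is the linearity of the inner product: $\langle\mathbf{h},\mathbf{x}(t)\rangle=\sum_{j=1}^n h_jp_j(t)=:q_{\mathbf{h}}(t)$, a single real polynomial. This is exactly where the hypothesis enters, since strong linear independence over $\Q$ says precisely that $q_{\mathbf{h}}$ is nonconstant for every nonzero $\mathbf{h}\in\Z^n$; hence $q_{\mathbf{h}}$ has some degree $d=d(\mathbf{h})\ge 1$ with leading coefficient $a_d\ne 0$. It therefore suffices to prove that for every nonconstant real polynomial $q$ one has $\frac1T\int_0^T e^{2\pi i q(t+\tau)}\,dt\to 0$ as $T\to\infty$, uniformly in $\tau$.

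For this core estimate I would invoke van der Corput's lemma. If $q$ has degree $d$ and leading coefficient $a_d$, then $q^{(d)}\equiv d!\,a_d$ is a nonzero constant, so for $d\ge 2$ one obtains $\bigl|\int_0^T e^{2\pi i q(t+\tau)}\,dt\bigr|\le c_d$ for a constant $c_d$ depending only on $d$ and $a_d$, while for $d=1$ the integral is computed explicitly and bounded by $(\pi|a_1|)^{-1}$. The crucial point is that the translation $t\mapsto t+\tau$ changes neither the degree of $q$ nor its leading coefficient $a_d$; consequently the bound is independent of both $T$ and $\tau$. Dividing by $T$ then yields $\frac1T\int_0^T e^{2\pi i q_{\mathbf{h}}(t+\tau)}\,dt=O(1/T)$ uniformly in $\tau$, and feeding this into the Erd\H{o}s--Tur\'an--Koksma estimate completes the argument.

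The main obstacle is not any single step but the bookkeeping of uniformity: one must check that every constant produced along the way (the van der Corput constant $c_d$, the Erd\H{o}s--Tur\'an--Koksma weights, the truncation level $H$) is independent of the translation parameter $\tau$. This is ultimately guaranteed by the translation-invariance of the top derivative of $q_{\mathbf{h}}$, which is exactly the fact that the hypothesis of strong linear independence is designed to supply through the nonvanishing of $a_d$.
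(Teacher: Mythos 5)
Your proof is correct, and it shares its skeleton with the paper's: both start from Weyl's criterion for continuous well-distribution, reducing the claim to showing that $\frac{1}{T}\int_0^T e^{2\pi i q_{\mathbf{h}}(t+\tau)}\,dt\to 0$ uniformly in $\tau$ for $q_{\mathbf{h}}=\sum_{j}h_jp_j$, and both use strong linear independence at exactly the same point, namely to guarantee that $q_{\mathbf{h}}$ is nonconstant for every nonzero $\mathbf{h}\in\Z^n$. Where you genuinely diverge is the core one-dimensional estimate. The paper stays inside equidistribution theory: it checks that the nonconstant polynomial $q_{\mathbf{h}}$ eventually satisfies the derivative/monotonicity hypotheses of \cite[Theorem 2.79]{drmotichy97}, concludes from that packaged theorem that $q_{\mathbf{h}}$ is itself continuously well distributed, and then applies Weyl's criterion a second time (in the converse direction) to recover the decay of the exponential integral. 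You instead bound the oscillatory integral directly with van der Corput's lemma, exploiting that $q_{\mathbf{h}}^{(d)}\equiv d!\,a_d\neq 0$ is a constant unchanged by the substitution $t\mapsto t+\tau$ (with the explicit computation when $d=1$), and you make the bridge from exponential sums back to discrepancy quantitative via the Erd\H{o}s--Tur\'an--Koksma inequality rather than citing the criterion as a black box. Your route is more self-contained and yields a rate --- an $O(1/T)$ bound on the exponential averages, uniform in $\tau$, hence an explicit discrepancy bound in terms of the truncation level $H$ and $T$ --- at the cost of importing the van der Corput machinery and tracking constants; the paper's route is shorter on the page because it delegates both the criterion and the polynomial estimate to the literature, but it gives no rate and takes a slight detour (proving $q_{\mathbf{h}}$ is c.w.d.\ only to convert back to exponential integrals). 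Both arguments rest on the same uniformity mechanism that you correctly identify: the quantities controlling the estimate are translation invariant.
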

\begin{proof}
By Weyl's criterion for continuous well--distribution \cite[Theorem
2.73]{drmotichy97} it suffices to prove that for every nonzero
$\mathbf{h}=(h_1,\ldots,h_n)\in \Z^n$,
\begin{equation}
\label{weylwd}
\lim_{T\to \infty}\frac{1}{T}\int_0^T e^{2\pi i \mathbf{h}\cdot x(t+\tau)}\,dt=0,
\end{equation}
uniformly in $\tau\geq 0$.

Let $\mathbf{h}=(h_1,\ldots,h_n)\in \Z^n$ be nonzero. Since the polynomial
$q_{h}(t)=\sum_{j=1}^n h_j p_j(t)$ is nonconstant, there exist $t_0\in
\R$ and $C>0$ such that $|q_h(t)|\geq C$ and $q_h^{\prime\prime}(t)$
has constant sign for every $t\geq t_0$. From \cite[Theorem
2.79]{drmotichy97} it then follows that $q_h$ is
continuously well distributed, and Weyl's criterion applied to $q_h$ shows
that \eqref{weylwd} holds.
\end{proof}
If $F$ is a family of $\T$-valued functions, we are trying to estimate
how long an interval $I$  should be for $F$ to  be as  likely as expected
to send some element of $I$ into a  fixed  neighbourhood of $\T$.
We next see, as a consequence of Theorem \ref{wdpoly}, that  for
polynomial phase functions
generated by  strongly linearly
independent polynomials,  this happens as soon as the length of $I$
exceeds a bound that depends only on the cardinality of the family
and the size of the neighbourhood.

\begin{theorem}\label{li}
Let $F=\{\psi_{p_1},\ldots,\psi_{p_n}\}$ where
$\{p_1,\ldots,p_n\}\subseteq \R[x]$ are strongly linearly independent over
$\Q$, and let $\gamma>0$. Then there exists $L(F,\gamma)>0$ such that
for every interval $I\subseteq \R$ with $\lambda_\R(I)\geq L(F,\gamma)$
and every $a\in \R$,
\[
\lambda_\R\left(N^{\vartriangleleft}\left(F,\delta,a\right)\cap I\right)
\geq  \left(  (2\delta)^n- \gamma\right)\lambda_\R(I),
\]
for every $\delta>0$ with $(2\delta)^n-\gamma>0$.
In particular, $F\in \mathfrak{P}_{I,n,\delta}$ if
$\lambda_\R(I)\geq L(F,\delta^n (2^n-1))$.
\end{theorem}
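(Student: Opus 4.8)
The plan is to identify $N^{\vartriangleleft}(F,\delta,a)\cap I$ with the set of times $t\in I$ at which the curve $t\mapsto e^{2\pi i\mathbf{x}(t)}$, where $\mathbf{x}(t)=(p_1(t),\dots,p_n(t))$, lies in a fixed axis-parallel box of $\T^n$, and then to measure the time spent in that box through the continuous discrepancy of $\mathbf{x}$. By Definition \ref{polars}, $t\in N^{\vartriangleleft}(F,\delta,a)$ precisely when $\psi_{p_j}(t)\in\psi_{p_j}(a)\cdot\mathcal{V}_\delta$ for every $j$, that is, when $e^{2\pi i\mathbf{x}(t)}\in\mathcal{V}_a:=\prod_{j=1}^n\bigl(\psi_{p_j}(a)\cdot\mathcal{V}_\delta\bigr)$. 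In the relevant regime $2\delta\le 1$ each factor is an arc of normalized length $2\delta$, so $\mathcal{V}_a$ is an axis-parallel rectangle with $\lambda_{\T^n}(\mathcal{V}_a)=(2\delta)^n$ (when $2\delta\ge 1$ the factor is all of $\T$ and the conclusion is immediate).

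Writing $I=[\tau,\tau+T]$ with $T=\lambda_\R(I)$, I would then express the measure as the occupation time
\[
\lambda_\R\!\left(N^{\vartriangleleft}(F,\delta,a)\cap I\right)=\int_0^T\mathbf{1}_{\mathcal{V}_a}\!\left(e^{2\pi i\mathbf{x}(t+\tau)}\right)dt,
\]
and compare it with its heuristic value $(2\delta)^n T$ using Definition \ref{discrep}. Applying the discrepancy of the shifted curve $\mathbf{x}(\cdot+\tau)$ to the single box $\mathcal{V}_a$ gives
\[
\frac1T\,\lambda_\R\!\left(N^{\vartriangleleft}(F,\delta,a)\cap I\right)\ge (2\delta)^n-D_T\bigl(\mathbf{x}(\cdot+\tau)\bigr).
\]
The virtue of routing the argument through the discrepancy is that the supremum in Definition \ref{discrep} is taken over all axis-parallel boxes simultaneously, so this estimate is automatically uniform in $a$; only the dependence on the position $\tau$ of $I$ survives.

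To eliminate that dependence I would invoke Theorem \ref{wdpoly}: strong linear independence over $\Q$ of $\{p_1,\dots,p_n\}$ makes $\mathbf{x}$ continuously well distributed modulo $1$, so $D_T(\mathbf{x}(\cdot+\tau))\to 0$ as $T\to\infty$ \emph{uniformly in $\tau$}. Hence there is $L(F,\gamma)>0$ such that $D_T(\mathbf{x}(\cdot+\tau))<\gamma$ for every $\tau$ as soon as $T\ge L(F,\gamma)$, and the previous display yields $\lambda_\R(N^{\vartriangleleft}(F,\delta,a)\cap I)\ge\bigl((2\delta)^n-\gamma\bigr)\lambda_\R(I)$ for all $a$ and all admissible $\delta$. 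The final assertion then follows by choosing $\gamma=\delta^n(2^n-1)$, which gives $(2\delta)^n-\gamma=\delta^n$ and hence $F\in\mathfrak{P}_{I,n,\delta}$ by Definition \ref{ui}.

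I expect the only delicate point to be exactly this uniformity in $\tau$. Since $I$ may be placed anywhere on the line, a bound depending on the location of $I$ (or one furnished by mere uniform distribution rather than well distribution) would not produce a single threshold $L(F,\gamma)$; it is the stronger well-distribution property of Theorem \ref{wdpoly} that makes the threshold independent of both the position of $I$ and the center $a$.
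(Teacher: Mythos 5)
Your proof is correct and takes essentially the same route as the paper's: both identify $\lambda_\R\left(N^{\vartriangleleft}(F,\delta,a)\cap I\right)$ with the occupation time of the axis-parallel box $\prod_{j=1}^n \psi_{p_j}(a)\cdot\mathcal{V}_\delta$ by the curve $e^{2\pi i \mathbf{x}(t)}$, bound it below by $(2\delta)^n - D_T$ via the continuous discrepancy of the shifted curve, and obtain a threshold $L(F,\gamma)$ independent of both $a$ and the position $\tau$ of $I$ from the uniform-in-$\tau$ decay of $D_T$ guaranteed by Theorem \ref{wdpoly}. The only differences are cosmetic: you make explicit the trivial regime $2\delta\ge 1$ and the choice $\gamma=\delta^n(2^n-1)$ for the final assertion, both of which the paper leaves implicit.
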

\begin{proof}
Fix $a\in \R$ and $\gamma>0$. Define  $\mathbf{x}_{\mbox{\tiny
$F$}}\colon [0,+\infty)\to \R^n$ by ${\displaystyle
\mathbf{x}_F(t)=\left(p_1(t),\ldots,p_n(t)\right)}$ and put, for each
$\delta>0$,  ${\displaystyle  \mathcal{V}_{\delta,a}=\psi_{p_1}(a)\cdot
\mathcal{V}_\delta\times \cdots\times \psi_{p_n}(a)\cdot
\mathcal{V}_\delta}\subseteq \T^n$. By Theorem \ref{wdpoly} the function
$\mathbf{x}_{\mbox{\tiny $F$}}(t) $ is continuously well distributed.
There is accordingly $L(F,\gamma)>0$ such that  $T\geq L(F,\gamma)$
implies $D_T(\mathbf{x}(t+\tau))\leq \gamma$ for every $\tau$, i.e.
\begin{equation}\label{eq:wd}
\frac{1}{T}\int_0^T
\mathbf{1}_{\mbox{\tiny$\mathcal{V}_{\delta,a}$}}
\left(e^{2\pi i \mathbf{x}_F(t+\tau)}\right)\, dt\geq (2\delta)^n-\gamma.
\end{equation}
Let now $I=[\tau_0,\tau_0+L]\subseteq \R$ be an arbitrary interval of
length  $L\geq L(F,\gamma)$. Taking into account the definition of $
\mathcal{V}_{\delta,a}$, inequality \eqref{eq:wd} applied to $\tau=\tau_0$
and $T=L$ implies that
\begin{align*}
L\cdot \left((2\delta)^n-\gamma\right)&\leq
\lambda_\R\left(\left\{t\in[0,L] \colon \psi_{p_j}(t+\tau_0)\in
\psi_{p_j}(a)\cdot \mathcal{V}_\delta\mbox{ for
} j=1,\ldots n
\right\}\right)\\
&= \lambda_\R\left(N^{\vartriangleleft}\left(F,\delta,a\right)\cap I\right).
\end{align*}
\end{proof}
The same argument of Theorem \ref{li} with well distributed sequences
instead of continuously well distributed functions
can be used for $G=\Z$.
\begin{corollary}\label{cor:li}
Let $F=\{\psi_{p_1},\ldots,\psi_{p_n}\}$ where
$\{p_1,\ldots,p_n\}\subseteq \R[x]$ have coefficients in $[0,1)$ and
are strongly linearly independent over $\Q$, and let $\gamma>0$. Then
there is $L(F,\gamma)>0$ such that, for every interval $I\subseteq \Z$
with $|I|\geq L(F,\gamma)$ and every $a\in \Z$,
\[|N^{\vartriangleleft}\left(F,\delta,a\right)\cap I|\geq
\left((2\delta)^n- \gamma\right)|I|,\]
for every $\delta>0$ with $(2\delta)^n-\gamma>0$. In particular, $F\in
\mathfrak{P}_{I,n,\delta}$ if $|I|\geq L(F,\delta^n (2^n-1))$.
\end{corollary}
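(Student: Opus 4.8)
The plan is to run the proof of Theorem \ref{li} essentially verbatim, replacing every continuous object by its discrete counterpart: the continuous discrepancy $D_T(\mathbf{x})$ of a trajectory by the discrepancy of a finite sequence $\mathbf{x}_F(1),\ldots,\mathbf{x}_F(N)$, and continuous well distribution by well distribution of a sequence modulo $1$. The geometry is unchanged: for a fixed $a\in\Z$ and $\delta>0$ I again set $\mathbf{x}_F(k)=(p_1(k),\ldots,p_n(k))$ and $\mathcal{V}_{\delta,a}=\psi_{p_1}(a)\mathcal{V}_\delta\times\cdots\times\psi_{p_n}(a)\mathcal{V}_\delta\subseteq\T^n$, so that counting the indices $k\in I$ with $e^{2\pi i\mathbf{x}_F(k)}\in\mathcal{V}_{\delta,a}$ is exactly the computation of $|N^{\vartriangleleft}(F,\delta,a)\cap I|$, while $\lambda_{\T^n}(\mathcal{V}_{\delta,a})=(2\delta)^n$ by translation invariance.

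The first and decisive step is the discrete analogue of Theorem \ref{wdpoly}: that the vector sequence $(\mathbf{x}_F(k))_{k\geq 0}$ is well distributed modulo $1$. By the discrete Weyl criterion this reduces to showing, for every nonzero $\mathbf{h}=(h_1,\ldots,h_n)\in\Z^n$, that the block averages $\frac1N\sum_{k=M+1}^{M+N}e^{2\pi i\,\mathbf{h}\cdot\mathbf{x}_F(k)}$ tend to $0$ as $N\to\infty$, uniformly in $M\geq 0$. The exponent here is $q_h(k)$ with $q_h=\sum_{j=1}^n h_j p_j$, and strong linear independence guarantees that $q_h$ is nonconstant; one then appeals to the classical theorem on well distribution of real polynomial sequences modulo $1$, in place of the continuous statement \cite[Theorem 2.79]{drmotichy97}.

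Once well distribution is in hand the rest is automatic. It provides, for each $\gamma>0$, a threshold $L(F,\gamma)$ such that every block of length $N\geq L(F,\gamma)$ has discrepancy at most $\gamma$, uniformly in the starting point; applying this to the block indexed by an arbitrary interval $I\subseteq\Z$ with $|I|\geq L(F,\gamma)$ and to the box $\mathcal{V}_{\delta,a}$ yields
\[
|N^{\vartriangleleft}(F,\delta,a)\cap I|\geq\bigl(\lambda_{\T^n}(\mathcal{V}_{\delta,a})-\gamma\bigr)|I|=\bigl((2\delta)^n-\gamma\bigr)|I|,
\]
which is the asserted inequality; the membership $F\in\mathfrak{P}_{I,n,\delta}$ then follows by taking $\gamma=\delta^n(2^n-1)$, exactly as in Theorem \ref{li}.

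The step I expect to be the real obstacle is the well-distribution claim of the second paragraph, since the discrete case is genuinely more delicate than the continuous one. A continuous trajectory $t\mapsto q_h(t)$ is well distributed as soon as $q_h$ is nonconstant, but the integer sequence $(q_h(k))_k$ is well distributed modulo $1$ only when some coefficient of $q_h$ of degree $\geq 1$ is irrational; if they are all rational the sequence is periodic and equidistributes on a finite set. Thus the care in this corollary lies in verifying that the standing hypotheses — the normalization of coefficients to $[0,1)$ (harmless on $\Z$, since $\psi_p$ depends only on the coefficients modulo $1$) together with strong linear independence over $\Q$ — actually supply the required irrationality for every $q_h$, possibly at the cost of strengthening mere nonconstancy of $q_h$ to a genuine irrationality condition, and in delimiting the range of $\delta$ for which the statement has content (for $2\delta\geq 1$ the box $\mathcal{V}_{\delta,a}$ already fills $\T^n$). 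This is the one place where \textit{the same argument} requires more than a mechanical translation.
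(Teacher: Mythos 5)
Your first three paragraphs follow exactly the route the paper itself takes: the paper's entire ``proof'' of this corollary is the one-line remark preceding it (``the same argument of Theorem~\ref{li} with well distributed sequences instead of continuously well distributed functions''), so your expansion of that remark matches the intended argument step for step.

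However, the obstacle you flag in your final paragraph is not merely a step requiring care --- it is a genuine gap, and under the hypotheses as stated it cannot be closed, because the corollary as stated is false. The discrete well-distribution theorem you would need requires that for each nonzero $\mathbf{h}\in\Z^n$ the polynomial $q_h=\sum_j h_jp_j$ have an \emph{irrational} coefficient in some degree $\geq 1$; strong linear independence gives only nonconstancy, and the normalization of coefficients to $[0,1)$ does not repair this, since rationals in $[0,1)$ survive it. Concretely, take $n=1$ and $p_1(x)=\tfrac12 x$: its coefficients lie in $[0,1)$ and $\{p_1\}$ is strongly linearly independent over $\Q$ (every nonzero integer multiple of $p_1$ is nonconstant), yet $\psi_{p_1}(k)=(-1)^k$ on $\Z$, so for $\delta<\tfrac12$ the set $N^{\vartriangleleft}(F,\delta,a)\cap I$ consists of the integers in $I$ with the same parity as $a$ and has cardinality at most $\tfrac{|I|}{2}+1$. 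Choosing $\delta=0.49$ and $\gamma=0.1$ gives $(2\delta)^n-\gamma=0.88>0$, and the asserted inequality fails for every sufficiently long interval, no matter how $L(F,\gamma)$ is chosen. The correct hypothesis --- under which your sketch then does go through essentially verbatim --- is that no nontrivial integer combination $\sum_j h_jp_j$ lies in $\Q[x]+\R$, i.e.\ that each $q_h$ has an irrational coefficient of positive degree; for linear polynomials $p_j=\tau_jx$ this amounts to $\{1,\tau_1,\dots,\tau_n\}$ being linearly independent over $\Q$, not just $\{\tau_1,\dots,\tau_n\}$. (This is consistent with the rest of the paper, which treats rational characters by entirely separate, non-equidistribution arguments: Theorem~\ref{mfetnei1} and Step~4 of Theorem~\ref{BD-R}.) So your diagnosis --- that this is the one place where the passage from $\R$ to $\Z$ is not a mechanical translation --- is exactly right, and it identifies a real defect that the paper's one-line proof glosses over; what your proposal stops short of is noticing that the needed irrationality actually \emph{fails} under the stated hypotheses, so the statement itself, not just the proof, must be amended.
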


To close this section we consider sets of characters instead of
sets of more general continuous $\T$-valued functions. In this
case $F\in \mathfrak{P}_{I,a,n,\varepsilon}$ if and only if $F\in
\mathfrak{P}_{I,n,\varepsilon}$, and therefore we localize our arguments
at the identity. For some special sets of characters $F$  we can actually
find a more concrete bound for Theorem \ref{li}.
\begin{theorem}\label{mfetnei1}
Let  $0<\frac{q_1}{p_1}<\cdots<\frac{q_n}{p_n}=1$ be a finite sequence of
rationals and $\varepsilon\in \left(0,\frac 1 2\right)$. If $I\subseteq
\R$ is an interval with $\lambda_\R(I)\geq p_1\cdots p_{n-1}$,
then $F:=\{\chi_{\frac{q_1}{p_1}},\dots,\chi_{\frac{q_n}{p_n}}\}\in
\mathfrak{P}_{I,n,\varepsilon}$.
\end{theorem}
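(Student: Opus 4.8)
The plan is to exploit that every member of $F$ is a character, so that $\mfteiba=a+N^{\vartriangleleft}(F,\varepsilon)$ and the whole problem becomes translation invariant. Writing $\|x\|$ for the distance from $x\in\R$ to the nearest integer (so that $e^{2\pi i x}\in\mathcal{V}_\varepsilon$ iff $\|x\|<\varepsilon$), I would set
\[
S:=N^{\vartriangleleft}(F,\varepsilon)=\Bigl\{t\in\R:\bigl\|\tfrac{q_j}{p_j}t\bigr\|<\varepsilon\ \text{for }j=1,\dots,n\Bigr\}.
\]
Since $\lambda_\R\bigl(N^{\vartriangleleft}(F,\varepsilon,a)\cap I\bigr)=\lambda_\R\bigl(S\cap(I-a)\bigr)$, proving the bound for \emph{every} interval of length $\ge P:=p_1\cdots p_{n-1}$ settles all $a$ at once. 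Moreover $\tfrac{q_j}{p_j}P\in\Z$ for each $j$ (because $p_j\mid P$ for $j<n$ and $q_n/p_n=1$), so $S$ is $P$--periodic, and it suffices to estimate $S$ on windows of length $P$ and then transfer to an arbitrary $I$ with $\lambda_\R(I)\ge P$.

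For the core estimate I would replace the sharp indicator of $\mathcal{V}_\varepsilon$ by a positive--definite minorant. Let $\tau_\varepsilon(x)=\bigl(1-\|x\|/\varepsilon\bigr)_+$ be the $1$--periodic triangle (Fej\'er) function; then $0\le\tau_\varepsilon\le\mathbf{1}_{\{\|x\|<\varepsilon\}}$, $\int_0^1\tau_\varepsilon=\varepsilon$, and all its Fourier coefficients $\widehat{\tau_\varepsilon}(k)$ are nonnegative. Expanding $\prod_j\tau_\varepsilon(\tfrac{q_j}{p_j}t)$ into a Fourier series and integrating over a full period annihilates every frequency $\omega_{\mathbf{k}}=\sum_j k_j\tfrac{q_j}{p_j}\neq0$, since each satisfies $\omega_{\mathbf{k}}P\in\Z$; hence, for every $c$,
\[
\lambda_\R\bigl(S\cap[c,c+P)\bigr)\ \ge\ \int_c^{c+P}\prod_{j=1}^n\tau_\varepsilon\!\Bigl(\tfrac{q_j}{p_j}t\Bigr)\,dt
\ =\ P\!\!\sum_{\substack{\mathbf{k}\in\Z^n\\ \sum_{j}k_j q_j/p_j=0}}\ \prod_{j=1}^n\widehat{\tau_\varepsilon}(k_j).
\]
Every surviving term is nonnegative and the term $\mathbf{k}=0$ already contributes $\widehat{\tau_\varepsilon}(0)^n=\varepsilon^n$, so $\lambda_\R(S\cap[c,c+P))\ge\varepsilon^nP$ uniformly in $c$; this is precisely $F\in\mathfrak{P}_{J,n,\varepsilon}$ for every window $J$ of length $P$.

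To pass to a general interval $I$ with $L:=\lambda_\R(I)\ge P$, I would split $I$ into $q=\lfloor L/P\rfloor\ge1$ disjoint period--windows, each carrying at least $\varepsilon^nP$, which gives $\lambda_\R(S\cap I)\ge q\varepsilon^nP\ge\tfrac12\varepsilon^nL$ because $qP\ge L/2$ for $L\ge P$. The hard part will be the remaining factor $2$: the target $\varepsilon^n\lambda_\R(I)$ forces the per--period density to be at least $2\varepsilon^n$, not merely $\varepsilon^n$, and this cannot come from the minorant, since $\varepsilon$ is the extremal value in Tur\'an's problem for the support $\{\|x\|<\varepsilon\}$. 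I would instead establish $\lambda_\R(S\cap[0,P))\ge 2\varepsilon^nP$ by a direct count. Rescaling $t=P\sigma$ turns the claim into
\[
\lambda\bigl(\{\sigma\in[0,1):\|a_j\sigma\|<\varepsilon,\ j=1,\dots,n\}\bigr)\ \ge\ 2\varepsilon^n,\qquad a_j:=\tfrac{q_j}{p_j}P\in\Z,\quad a_1<\cdots<a_n=P,
\]
which I would attack by induction on $n$ (the case $n=1$ being the equality $2\varepsilon$): decomposing $\sigma$ near each of the $a_n$ points $k/a_n$, the lower--frequency conditions cut out a single subinterval of each such cell, while the vector of residues $(k\bmod p_1,\dots,k\bmod p_{n-1})$ — equivalently the phases $\tfrac{q_j}{p_j}k\bmod 1$ — equidistributes as $k$ runs through a period (by the Chinese Remainder Theorem when the $p_j$ are coprime, and over the appropriate subgroup in general). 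Evaluating and summing the resulting inner measures is where the genuine combinatorial work lies; the main obstacle is the conditional--density step and, within it, the regime where consecutive denominators are close, where the single--subinterval description degrades. The constant $2\varepsilon^n$ is sharp, being approached as $\varepsilon\to\tfrac12$ for frequencies in geometric ratio, so no softer argument can replace this count.
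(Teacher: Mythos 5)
Your reduction and your first estimate are correct: since the elements of $F$ are characters, $N^{\vartriangleleft}(F,\varepsilon,a)=a+N^{\vartriangleleft}(F,\varepsilon)$, so it suffices to bound $\lambda_\R(S\cap J)$ for every interval $J$ with $\lambda_\R(J)\ge P:=p_1\cdots p_{n-1}$, where $S=N^{\vartriangleleft}(F,\varepsilon)$ is $P$--periodic; and the Fej\'er--minorant computation is sound, because $\tau_\varepsilon$ has nonnegative Fourier coefficients with $\widehat{\tau_\varepsilon}(0)=\varepsilon$, every frequency $\sum_jk_jq_j/p_j$ becomes an integer after multiplication by $P$, and $\prod_j\tau_\varepsilon(q_jt/p_j)\le\mathbf{1}_S(t)$ pointwise, so indeed $\lambda_\R\bigl(S\cap[c,c+P)\bigr)\ge\varepsilon^nP$ uniformly in $c$. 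This Fourier--analytic device is genuinely different from the paper's argument, which never expands in frequencies but instead exhibits explicit subintervals of $S$.

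However, the theorem is not proved, and you say so yourself. What your completed steps yield is $\lambda_\R(S\cap I)\ge\lfloor L/P\rfloor\,\varepsilon^nP\ge\tfrac12\varepsilon^n\lambda_\R(I)$ --- half the required bound --- and the missing factor $2$ is exactly where the content of the theorem sits. Your own Tur\'an remark shows the one--dimensional minorant route is structurally capped at $\varepsilon^n$ per period, and your proposed repair, the ``direct count'' (cells at the points $k/a_n$, prescribed residues, CRT equidistribution, one short subinterval per residue vector), is not a loose end to be filled in later: it is, in its entirety, the paper's proof. There, for each $\vec\jmath\in\prod_{k=1}^{n-1}J_k$ with $J_k=\{0,\ldots,\lfloor p_k\varepsilon\rfloor\}\cup\{p_k-\lfloor p_k\varepsilon\rfloor,\ldots,p_k-1\}$, one integer $z_{\vec\jmath}$ is produced carrying a one--sided interval of length $\varepsilon$ inside $S$ (two--sided for $\vec\jmath=\vec 0$), and the per--period constant $2$ then rests on the inequality $\varepsilon\bigl[\prod_{k=1}^{n-1}(2\lfloor p_k\varepsilon\rfloor+1)+1\bigr]\ge 2\varepsilon^nP$, whose verification requires a three--case analysis on the sizes of the $p_k\varepsilon$; the delicate regime is precisely when $p_k\varepsilon<1$ for all but at most one $k$, where the product collapses and only the extra ``$+1$'' coming from the two--sided interval at $\vec\jmath=\vec 0$ rescues the constant. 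Until that count is carried out, your argument proves only the weakened statement with $\tfrac12\varepsilon^n$ in place of $\varepsilon^n$. (A minor point: your sharpness heuristic is also off --- as $\varepsilon\to\tfrac12$ with fixed frequencies the density of $S$ is at least $1-n(1-2\varepsilon)\to 1$, far above $2\varepsilon^n\to 2^{1-n}$; near--extremality occurs rather when the denominators satisfy $p_k\varepsilon\approx 1$.)
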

\begin{proof}
Let $N:=p_1\cdots p_{n-1}$. We can assume that the fractions
$\frac{q_1}{p_1},\ldots, \frac{q_{n-1}}{p_{n-1}}$ are irreducible. If they
are not  we work with the simplified fractions and obtain a smaller $N$
that works.

We first assume $\lambda_\R(I)=N$ and define $J_k:=\left\{0,1,\ldots,
\floor{p_k\varepsilon}, p_k-\floor{p_k\varepsilon},\ldots,p_k-1\right\}$
for each $k=1,\dots,n-1$. We claim that
\begin{enumerate}
\item for each $\vec \jmath \in \prod_{k=1}^{n-1} J_k$ there exists
$z_{\vec \jmath}\in\Z$ such that either $[z_{\vec \jmath},z_{\vec
\jmath}+\varepsilon]$ or $[z_{\vec \jmath}-\varepsilon,z_{\vec \jmath}]$
is contained in $\mfteib$,
\item for $z_{\vec 0}$ we have $[z_{\vec 0}-\varepsilon,z_{\vec
0}+\varepsilon]\subseteq \mfteib$, and
\item the integers $z_{\vec \jmath}$ are all different.
\end{enumerate}
Indeed, for fixed $j_{n-1}\in J_{n-1}$ we consider the set
$L_{n-1,j_{n-1}}$ consisting of those integers in $I$ which are mapped
to $e^{2\pi i j_{n-1}/p_{n-1}}$ by $\chi_{\frac{q_{n-1}}{p_{n-1}}}$, i.e.
\[L_{n-1,j_{n-1}}=\left\{z\in \Z\cap I\colon \frac{q_{n-1}}{p_{n-1}}\cdot
z=\dfrac{j_{n-1}}{p_{n-1}}+\ell \mbox{ for some $\ell\in \Z$ }\right\}.\]
Since (the class of) $q_{n-1}$ is a generator of the cyclic group
$\Z/p_{n-1}\Z$, the set $L_{n-1,j_{n-1}}$ contains precisely $N/p_{n-1}$
integers with a distance of $p_{n-1}$ between consecutive ones. For $z\in
L_{n-1,j_{n-1}}$, either $[z,z+\varepsilon]$ or $[z-\varepsilon,z]$
is contained in $I$ and  sent into $\mathcal{V}_\varepsilon$
by both $\chi_1$ and $\chi_{\frac{q_{n-1}}{p_{n-1}}}$. For $z\in
L_{n-1,0}$, both characters map $[z-\varepsilon,z+\varepsilon] $ into
$\mathcal{V}_\varepsilon$.

Next we fix $j_{n-2}\in J_{n-2}$ and consider the set $L_{n-2,j_{n-2}}$
of those elements of $L_{n-1,j_{n-1}}$ which are sent to $e^{2\pi i
j_{n-2}/p_{n-2}}$ by $\chi_{\frac{q_{n-2}}{p_{n-2}}}$, i.e.
\[L_{n-2,j_{n-2}}=\left\{z\in L_{n-1,j_{n-1}}\colon \frac{q_{n-2}}{p_{n-2}}\cdot
z=\dfrac{j_{n-2}}{p_{n-2}}+\ell \mbox{ for some $\ell\in \Z$ }\right\}.\]
As before, exactly $N/(p_{n-1}p_{n-2})$ elements of $L_{n-1,j_{n-1}}$
belong to $L_{n-2,j_{n-2}}$, and the distance between any two
consecutives is $p_{n-1}p_{n-2}$. For $z\in L_{n-2,j_{n-2}}$,
either $[z,z+\varepsilon]$ or $[z-\varepsilon,z]$ is sent into
$\mathcal{V}_\varepsilon$ by $\chi_1$, $\chi_{\frac{q_{n-1}}{p_{n-1}}}$
and $\chi_{\frac{q_{n-2}}{p_{n-2}}}$. For $z\in L_{n-2,0}$,
these characters map $[z-\varepsilon,z+\varepsilon]$ into
$\mathcal{V}_\varepsilon$.

After $(n-1)$ steps the components of $\vec \jmath \in \prod_{k=1}^{n-1}
J_k$ have been fixed, the set $L_{1,j_1}$ contains precisely one
integer, say $z_{\vec \jmath}\in \bigcap_{k=1}^{n-1} L_{k,j_k}$,
and $[z_{\vec \jmath},z_{\vec \jmath}+\varepsilon]$ or $[z_{\vec
\jmath}-\varepsilon,z_{\vec \jmath}]$ is contained in $\mfteib$. Since
each $\vec \jmath$ produces a different $z_{\vec \jmath}$, our claim
is proved.

Since $|J_k|=2 \floor{p_k\varepsilon}+1$ for each $k$, and the interval
around $z_{\vec \jmath}$ have length at least $\varepsilon$, the intervals
constructed in the previous claim have a total length of
\begin{equation}\label{eqint}
\varepsilon\cdot\left[\prod_{k=1}^{n-1}(2\floor{p_k\varepsilon}+1)+1\right]\geq
2\varepsilon^n N.
\end{equation}
In fact, since $2\floor{n\varepsilon}+1\geq n\varepsilon$ for
every $n\in \N$, in case $p_i\varepsilon,p_j\varepsilon\geq
\frac{1}{2-\sqrt{2}}$ for some $i\neq j$ then
$(2\floor{p_i\varepsilon}+1)(2\floor{p_j\varepsilon}+1)\geq
(2p_i\varepsilon-1)(2p_j\varepsilon-1)\geq
(\sqrt{2}p_i\varepsilon)(\sqrt{2}p_j\varepsilon)=2\varepsilon^2p_ip_j$,
and (\ref{eqint}) follows. On the other hand, if $1\leq
p_i\varepsilon \leq\frac{1}{2-\sqrt{2}}$ for some $i$, then
$3=(2\floor{p_i\varepsilon}+1)\geq 2 p_i\varepsilon$ and
inequality \eqref{eqint} holds. The only remaining case is when
$p_i\varepsilon<1$ for every $i$ except at most one $i_0$. In that case
from $2\floor{p_{i_0}\varepsilon}+1\geq 2p_{i_0}\varepsilon-1$ we also
obtain that (\ref{eqint}) holds since its left hand side is bounded
below by
\[
\varepsilon\cdot[(2\floor{p_{i_0}\varepsilon}+1)+1]\geq
\varepsilon\cdot(p_1\varepsilon) \cdots (2 p_{i_0})\cdots
(p_{n-1}\varepsilon)=2\varepsilon^n N.
\]

We have thus shown that $\lambda_\R\left( \mfteib\right)\geq
2\varepsilon^n\lambda_\R(I)$ when $\lambda_\R(I)=N$, as desired. In
case $\lambda_\R(I)>N$, there exist $j\in \N$ and $\delta\in[0,N)$
such that $\lambda_\R(I)=jN+\delta$. Therefore, the interval $I$ can
be split into $j$--many subintervals of length $N$ and another one of
length $\delta$. In each of the intervals of length $N$ we can argue as
above  and find a family of  subintervals of $\mfteib$ whose accumulated
length is $2N \varepsilon$. We then obtain that
\[
\lambda_\R\left(\mfteib\right)\geq 2jN\varepsilon^n\geq
2\varepsilon^n\left(1-\dfrac{1}{j+1}\right)\left(jN+\delta\right)\geq
\varepsilon^n \lambda_\R(I).
\]
\end{proof}
\begin{corollary}\label{mfetnei}
Let $0<\frac{q_1}{p_1}<\cdots<\frac{q_n}{p_n}$ be a finite sequence of
rationals and $\varepsilon\in \left(0,\frac 1 2\right)$. If $I\subseteq
\R$ is an interval with $\lambda_\R(I)\geq p_1\cdots p_n q_n^{n-2}$,
then $F:=\{\chi_{\frac{q_1}{p_1}},\dots,\chi_{\frac{q_n}{p_n}}\}\in
\mathfrak{P}_{I,n,\varepsilon}$.
\end{corollary}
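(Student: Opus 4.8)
The plan is to deduce the statement from Theorem~\ref{mfetnei1}, which already settles the case in which the largest frequency equals $1$, by rescaling the variable so as to normalise $q_n/p_n$ to $1$. Since every function in $F$ is a character, the remark preceding Theorem~\ref{mfetnei1} lets me work with $\mathfrak{P}_{I,n,\varepsilon}$ localised at the identity, so it is enough to control the Haar measure of $N^{\vartriangleleft}(F,\varepsilon)\cap I$.

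First I would introduce the dilation $s=\tfrac{q_n}{p_n}t$. Using $\chi_{q_k/p_k}(t)=e^{2\pi i(q_k/p_k)t}=\chi_{(q_kp_n)/(p_kq_n)}(s)$, this substitution carries $F$ onto the family $F'=\{\chi_{r_1},\dots,\chi_{r_n}\}$ with $r_k=\tfrac{q_kp_n}{p_kq_n}$, an increasing sequence of positive rationals whose largest term is $r_n=1$. The same dilation sends an interval $I$ of length $L$ to an interval $I'$ of length $\tfrac{q_n}{p_n}L$, and maps $N^{\vartriangleleft}(F,\varepsilon)\cap I$ onto $N^{\vartriangleleft}(F',\varepsilon)\cap I'$. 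As Lebesgue measure merely scales by the factor $\tfrac{q_n}{p_n}$ under this map, the defining inequality of $\mathfrak{P}_{I,n,\varepsilon}$ for the pair $(F,I)$ is equivalent to that of $\mathfrak{P}_{I',n,\varepsilon}$ for $(F',I')$. Thus everything reduces to placing $(F',I')$ within the scope of Theorem~\ref{mfetnei1}.

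The remaining step is bookkeeping on denominators, and this is where I expect to have to be most careful, although the difficulty is arithmetical rather than conceptual. Writing each $r_k$ in lowest terms as $a_k/b_k$, Theorem~\ref{mfetnei1} yields $F'\in\mathfrak{P}_{I',n,\varepsilon}$ as soon as $\lambda_\R(I')\geq b_1\cdots b_{n-1}$. Here one must observe that reducing $\tfrac{q_kp_n}{p_kq_n}$ can only make the denominator a divisor of $p_kq_n$, so $b_k\leq p_kq_n$ and hence $b_1\cdots b_{n-1}\leq q_n^{\,n-1}p_1\cdots p_{n-1}$. Translating this threshold back through $\lambda_\R(I')=\tfrac{q_n}{p_n}\lambda_\R(I)$ gives precisely $\lambda_\R(I)\geq p_1\cdots p_nq_n^{\,n-2}$, the hypothesis of the corollary; so under that hypothesis $F'\in\mathfrak{P}_{I',n,\varepsilon}$, and therefore $F\in\mathfrak{P}_{I,n,\varepsilon}$, as desired. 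The one conceptual point worth stressing is simply that normalising the top frequency is a reduction to the already-proved case that changes measures only by a harmless global scale factor.
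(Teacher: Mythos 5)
Your proposal is correct and follows essentially the same route as the paper: dilate by $\tfrac{q_n}{p_n}$ to normalise the top frequency to $1$, apply Theorem~\ref{mfetnei1} to the rescaled family, and transfer back via the identity $N^{\vartriangleleft}\bigl(\tfrac{p_n}{q_n}F,\varepsilon\bigr)\cap \tfrac{q_n}{p_n}I=\tfrac{q_n}{p_n}\bigl(N^{\vartriangleleft}(F,\varepsilon)\cap I\bigr)$, under which the defining inequality of $\mathfrak{P}_{I,n,\varepsilon}$ is scale-invariant. The only (harmless) difference is bookkeeping: you pass to lowest-terms denominators $b_k\mid p_kq_n$, while the paper applies Theorem~\ref{mfetnei1} directly with the representations $\tfrac{q_kp_n}{p_kq_n}$, whose denominators $p_kq_n$ give the same threshold $p_1\cdots p_{n-1}q_n^{\,n-1}$.
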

\begin{proof}
From $\lambda_\R(I)\geq p_1\cdots p_n q_n^{n-2}$ we get
$\lambda_\R(\frac{q_n}{p_n}I)\geq (q_np_1)\cdots (q_{n} p_{n-1})$,
and Theorem \ref{mfetnei1} then implies $\frac{p_n}{q_n}
F:=\{\chi_{\frac{p_nq_1}{q_np_1}},\ldots,
\chi_{\frac{p_nq_{n-1}}{q_np_{n-1}}},\chi_1\}\in
\mathfrak{P}_{\frac{q_n}{p_n} I,n,\varepsilon}$. The result
follows because $N^{\vartriangleleft}(\frac{p_n}{q_n} F,\varepsilon)\cap
\frac{q_n}{p_n}I=\frac{q_n}{p_n}\left(N^{\vartriangleleft}\left(F,\varepsilon\right)\cap
I\right)$ implies $\frac{p_n}{q_n} F\in \mathfrak{P}_{\frac{q_n}{p_n}
I,n,\varepsilon}$ if and only if ${\displaystyle F\in \mathfrak{P}_{
I,n,\varepsilon}}$.
\end{proof}
The interval can be shortened in the presence of certain algebraic relations in $F$.
\begin{corollary}\label{mfetneisimpl} Let
$0<\frac{q_1}{p}<\cdots<\frac{q_n}{p}$ be a finite sequence
of rationals and $\varepsilon\in\left(0,\frac 1 2\right)$. If
$I\subseteq \R$ is an interval with $\lambda_\R(I)\geq p q_n^{n-2}$,
then $F:=\{\chi_{\frac{q_1}{p}},\dots,\chi_{\frac{q_n}{p}}\}\in
\mathfrak{P}_{I,n,\varepsilon}$.
\end{corollary}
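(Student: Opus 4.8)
The plan is to reproduce the dilation argument used for Corollary \ref{mfetnei}, but to exploit the special feature here that all the fractions share a single denominator $p$; this is exactly what will produce the sharper threshold $p\,q_n^{n-2}$ in place of the $p_1\cdots p_n\,q_n^{n-2}$ that a blind application of Corollary \ref{mfetnei} (with $p_j\equiv p$) would give.

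First I would rescale the frequencies so that the largest character becomes $\chi_1$. Multiplying every frequency by $\frac{p}{q_n}$ sends $F$ to
\[
\tfrac{p}{q_n}F=\{\chi_{q_1/q_n},\dots,\chi_{q_{n-1}/q_n},\chi_1\},
\]
a family of the type handled by Theorem \ref{mfetnei1}: since $\frac{q_1}{p}<\cdots<\frac{q_n}{p}$ forces $q_1<\cdots<q_n$, the fractions $\frac{q_1}{q_n}<\cdots<\frac{q_{n-1}}{q_n}<1$ are ordered, strictly between $0$ and $1$, with the largest equal to $1$, and—crucially—each of the first $n-1$ has denominator dividing the common value $q_n$. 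Hence the quantity playing the role of "$p_1\cdots p_{n-1}$" in the hypothesis of Theorem \ref{mfetnei1} is at most $q_n^{\,n-1}$.

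Next I would track the interval length through the dilation. Writing $J:=\frac{q_n}{p}I$, the hypothesis $\lambda_\R(I)\geq p\,q_n^{n-2}$ becomes $\lambda_\R(J)=\frac{q_n}{p}\lambda_\R(I)\geq q_n^{\,n-1}$, which is exactly the threshold Theorem \ref{mfetnei1} requires for the rescaled family. That theorem then yields $\tfrac{p}{q_n}F\in\mathfrak{P}_{J,n,\varepsilon}$. Finally I would transfer this back to $F$ through the dilation identity already used for Corollary \ref{mfetnei},
\[
N^{\vartriangleleft}\!\left(\tfrac{p}{q_n}F,\varepsilon\right)\cap\tfrac{q_n}{p}I
=\tfrac{q_n}{p}\left(N^{\vartriangleleft}(F,\varepsilon)\cap I\right),
\]
observing that Lebesgue measure scales by the same factor $\frac{q_n}{p}$ on both sides of the defining inequality. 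Therefore membership in $\mathfrak{P}$ is invariant under the dilation, and $\tfrac{p}{q_n}F\in\mathfrak{P}_{J,n,\varepsilon}$ holds if and only if $F\in\mathfrak{P}_{I,n,\varepsilon}$, which is the assertion.

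The only genuinely delicate point is the bookkeeping on denominators: one must be sure that after passing to the common denominator $q_n$ the relevant product is $q_n^{\,n-1}$ rather than something larger, since it is precisely this gain that accounts for the improved interval bound. Everything else is a routine instance of the scaling already established, so I do not expect any substantive obstruction beyond that verification.
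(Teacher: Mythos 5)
Your proposal is correct and follows essentially the same route as the paper: rescale by $\frac{q_n}{p}$ so the hypothesis $\lambda_\R(I)\geq p\,q_n^{n-2}$ becomes $\lambda_\R\bigl(\frac{q_n}{p}I\bigr)\geq q_n^{n-1}$, apply Theorem \ref{mfetnei1} to $\frac{p}{q_n}F=\{\chi_{q_1/q_n},\dots,\chi_1\}$ (whose common denominator $q_n$ gives the product $q_n^{n-1}$), and transfer back by dilation invariance of membership in $\mathfrak{P}_{I,n,\varepsilon}$. Your explicit verification of the dilation identity and of the denominator bookkeeping simply spells out what the paper leaves implicit.
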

\begin{proof}
From $\lambda_\R(I)\geq p q_n^{n-2}$
we obtain $\lambda_\R(\frac{q_n}{p}I)\geq
q_n^{n-1}$ and Theorem \ref{mfetnei1} then asserts
$\frac{p}{q_n}F=\{\chi_{\frac{q_1}{q_n}},\chi_{\frac{q_2}{q_n}},\dots,\chi_1\}\in
\mathfrak{P}_{\frac{q_n}{p}I,n,\varepsilon}$, i.e. $F\in
\mathfrak{P}_{I,n,\varepsilon}$.
\end{proof}
A considerably shorter interval is needed when $F$ is sparse enough.
\begin{corollary}\label{mftneisparse}
Let $F:=\{\chi_{\tau_1},\dots,\chi_{\tau_n}\}$ be such that
$\frac{\tau_{j+1}}{\tau_j}>\frac{1}{2\varepsilon}$, $j=1,\ldots,n-1$,
for some $\varepsilon\in\left(0,\frac 1 2\right)$. If $I\subseteq \R$
is an interval with $\lambda_\R(I)\geq \frac{1}{\tau_1}$, then $F\in
\mathfrak{P}_{I,n, \varepsilon}$.
\end{corollary}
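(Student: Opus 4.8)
The plan is to localize at the identity, reduce the whole statement to a single period, and then run an induction on $n$ that strips off the lowest frequency; the one genuinely delicate point will be a short window whose intersection with the lowest ``good set'' splits into two end-pieces. Throughout I write $\rho(x)=\operatorname{dist}(x,\Z)$, so that $\chi_\tau(t)\in\mathcal{V}_\varepsilon$ exactly when $\rho(\tau t)<\varepsilon$, and hence $N^{\vartriangleleft}(F,\varepsilon)=\{t\in\R:\rho(\tau_j t)<\varepsilon,\ j=1,\dots,n\}$.

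First I would reduce to $a=0$. Since each $\chi_{\tau_j}$ is a homomorphism, $N^{\vartriangleleft}(F,\varepsilon,a)=a+N^{\vartriangleleft}(F,\varepsilon)$, so $\lambda_\R\!\left(N^{\vartriangleleft}(F,\varepsilon,a)\cap I\right)=\lambda_\R\!\left(N^{\vartriangleleft}(F,\varepsilon)\cap(I-a)\right)$ and $I-a$ is an interval of the same length. As observed just before Theorem \ref{mfetnei1}, it therefore suffices to prove that $\lambda_\R\!\left(N^{\vartriangleleft}(F,\varepsilon)\cap I\right)\ge\varepsilon^n\lambda_\R(I)$ for \emph{every} interval $I$ with $\lambda_\R(I)\ge 1/\tau_1$. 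I will exploit two elementary facts. The sparsity hypothesis $\tau_{j+1}/\tau_j>1/(2\varepsilon)$ is equivalent to $2\varepsilon/\tau_j>1/\tau_{j+1}$, i.e. each connected component of $\{\rho(\tau_j t)<\varepsilon\}$, a ``good interval'' of length $2\varepsilon/\tau_j$, is longer than one full period $1/\tau_{j+1}$ of the next frequency. And, by partitioning into full periods, any interval $J$ with $\lambda_\R(J)\ge 1/\tau_j$ satisfies $\lambda_\R\!\left(\{t\in J:\rho(\tau_j t)<\varepsilon\}\right)\ge\lfloor\lambda_\R(J)\tau_j\rfloor\,2\varepsilon/\tau_j\ge\varepsilon\,\lambda_\R(J)$.

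Next, mimicking the remainder device in the proof of Theorem \ref{mfetnei1}, I would split a general $I$ into $m=\lfloor\lambda_\R(I)\tau_1\rfloor$ consecutive windows of length $1/\tau_1$ (plus a leftover) and reduce everything to the single \textbf{one-period estimate} $\lambda_\R\!\left(N^{\vartriangleleft}(F,\varepsilon)\cap W\right)\ge 2\varepsilon^n/\tau_1$ for every window $W$ of length $1/\tau_1$: summing this over the $m$ windows gives $\lambda_\R(N^{\vartriangleleft}(F,\varepsilon)\cap I)\ge m\,2\varepsilon^n/\tau_1\ge\varepsilon^n\lambda_\R(I)$, since $2m\ge\lambda_\R(I)\tau_1$. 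The one-period estimate is where the induction on $n$ enters. Intersecting $W$ with $\{\rho(\tau_1t)<\varepsilon\}$ produces either a single full good interval of length $2\varepsilon/\tau_1\ge 1/\tau_2$, or two end-pieces of total length $2\varepsilon/\tau_1$. In the first case, and in the second case \emph{when both pieces have length $\ge 1/\tau_2$}, the inductive hypothesis for the (still sparse) subfamily $\{\chi_{\tau_2},\dots,\chi_{\tau_n}\}$ applies on each piece and yields at least $\varepsilon^{n-1}$ times its length, so the total is at least $\varepsilon^{n-1}\cdot 2\varepsilon/\tau_1=2\varepsilon^n/\tau_1$, as required.

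The main obstacle is precisely the remaining subcase: a window in which $\{\rho(\tau_1t)<\varepsilon\}\cap W$ is two end-pieces and at least one of them is shorter than $1/\tau_2$, so the induction cannot be applied to it and simply discarding it loses exactly the measure that is needed. The route I would take is to treat the two end-pieces \emph{jointly}: translating the right piece by the period $1/\tau_1$ superimposes it on the left piece, and together they reconstitute a single full good interval of $\{\rho(\tau_1t)<\varepsilon\}$, of length $2\varepsilon/\tau_1$ and hence of $\tau_2$-length exceeding $1$. The price is that this translation shifts every higher phase by $\theta_j=\tau_j/\tau_1\bmod 1$, so one is forced to compare the higher good set on the left piece with a $\theta$-shifted higher good set on the translated right piece. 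Showing that these two contributions still add up to at least the $2\varepsilon^n/\tau_1$ that a genuine full good interval would contribute, i.e. controlling the phase shift $\theta$, is the crux of the argument; the ample slack between the heuristic density $(2\varepsilon)^n$ and the claimed $\varepsilon^n$ is exactly what should leave room to absorb it.
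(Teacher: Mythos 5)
Your skeleton --- localizing at $a=0$, splitting $I$ into windows of length $1/\tau_1$, and settling the cases where the $\chi_{\tau_1}$-good set in a window is one full arc, or two end-pieces each of length at least $1/\tau_2$ --- is sound, and it is essentially the same skeleton as the paper's proof. But the step you yourself call the crux, namely gluing the two end-pieces by a translation of $1/\tau_1$ and absorbing the induced phase shifts $\tau_j/\tau_1 \bmod 1$ into the slack between $(2\varepsilon)^n$ and $\varepsilon^n$, is a genuine gap, and it cannot be filled: in that case the quantity you must bound from below can be \emph{zero}, so no slack helps. Concretely, take $n=2$, $\varepsilon=\tfrac{1}{20}$, $\tau_2=1$, $\tau_1=\tfrac{10}{107}$ (so $\tau_2/\tau_1=10.7>10=\tfrac{1}{2\varepsilon}$), $a=0$, and $I=[0.1,\,10.8]$, an interval of length $1/\tau_1=10.7$. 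Writing $\rho(x)$ for the distance from $x$ to $\Z$, the set $\left\{t\in I:\rho(\tau_1 t)<\varepsilon\right\}$ consists of the two end-pieces $[0.1,\,0.535)\cup(10.165,\,10.8]$, while $\left\{t:\rho(\tau_2 t)<\varepsilon\right\}=\bigcup_{m\in\Z}(m-0.05,\,m+0.05)$; since $0.05<0.1$, $0.95>0.535$, $10.05<10.165$ and $10.95>10.8$, these sets are disjoint. Hence $N^{\vartriangleleft}(F,\varepsilon,0)\cap I=\emptyset$, whereas $F\in\mathfrak{P}_{I,2,\varepsilon}$ would require $\lambda_\R\left(N^{\vartriangleleft}(F,\varepsilon,0)\cap I\right)\geq\varepsilon^2\lambda_\R(I)=0.02675$. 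So the one-window estimate you are after is false, and in fact this example refutes the corollary exactly as stated.

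For comparison: the paper's own proof founders silently on precisely the case you isolated. It picks $I_1\subseteq I$ with $\lambda_\R(I_1)=2\varepsilon/\tau_1$ and $\chi_{\tau_1}[I_1]=\mathcal{V}_\varepsilon$, and then asserts $\chi_{\tau_2}[I_1]=\T$; that assertion is valid only when $I_1$ is connected, i.e.\ when the good set is a single full arc. In the example above $I_1$ is forced (up to null sets) to be the union of the two end-pieces, and $\chi_{\tau_2}[I_1]=\left\{e^{2\pi i s}:s\in[0.1,\,0.8]\right\}$ misses $\mathcal{V}_\varepsilon$ entirely, so the nested-interval recursion never starts. Your diagnosis of where the difficulty lies is therefore sharper than the paper's treatment; the correct conclusion, however, is not that the phase shift can be absorbed by the slack, but that the statement itself needs repair --- either stronger hypotheses (frequency ratios or interval length large enough that every window contains a \emph{full} good arc at every scale) or a weaker conclusion than $F\in\mathfrak{P}_{I,n,\varepsilon}$.
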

\begin{proof}
If $\lambda_\R(I)=\frac{1}{\tau_1}$, there exists $I_1\subseteq
I$ such that $\lambda_\R(I_1)=\frac{2\varepsilon}{\tau_1}$
and $\chi_{\tau_1}[I_1]=\mathcal{V}_\varepsilon$. Then
$\chi_{\tau_2}[I_1]=\T$ and there exists $I_2\subseteq I_1$ such that
$\lambda_\R(I_2)=2\varepsilon\cdot \frac{2\varepsilon}{\tau_1}$
and $\chi_{\tau_2}[I_2]=\mathcal{V}_\varepsilon$. At
the $n$th step we find $I_n\subseteq I_{n-1}$ such
that $\lambda_\R(I_n)=\frac{(2\varepsilon)^n}{\tau_1}$ and
$\chi_{\tau_n}[I_n]\subseteq \mathcal{V}_\varepsilon$. It follows that
$I_n\subseteq\mfteib$ and $F\in \mathfrak{P}_{I,n,\varepsilon}$. Finally,
for $\lambda_\R(I)\geq \frac{1}{\tau_1}$ there exist
$j\in \N$ and $\delta\in [0,\frac{1}{\tau_1})$ such that
$\lambda_\R(I)=\frac{j}{\tau_1}+\delta$. Split $I$ in $j$-many
subintervals of length $\frac{1}{\tau_1}$ plus another one of length
$\delta$. By the above argument each of the former intervals contains
a subinterval of length $\frac{(2\varepsilon)^n}{\tau_1}$ and the proof
then goes as in Theorem \ref{mfetnei1}.
\end{proof}

\section{Random Bohr dense subsets}\label{sec:RBDS}

In this section we combine the results of Sections
\ref{sec:appdense} and \ref{sec:match} in order to show that if enough points
are randomly chosen from each element of a sequence of sufficiently large intervals, then
almost surely we obtain a Bohr--dense set.
The  estimates in Section \ref{sec:match} are first used to find criteria
for a collection of finite choices in a sequence of long enough intervals
of real numbers to be an $(F,\R,\varepsilon)$--matching set for every
$m$--set $F$ (with $m\in \N$
fixed) of strongly linearly independent polynomial phase functions.
The estimates of Section \ref{sec:appdense} are then used to see
that these criteria are almost surely met. This  yields sets that
are $(F,\R,\varepsilon)$--matching for every finite set of degree
$n$ polynomial phase functions and every $\varepsilon>0$, that is,
sets of uniqueness for $\ap_{\mathfrak{C}_{n}}$.

\begin{lemma}\label{lem:BD-R}
Let $I^\ast=(I_k)_{k\in \N}$ be a sequence of intervals
$I_k=[a_k,a_k+b_k]\subseteq \R$ with $\limsup(b_k)_{k\in \N}=+\infty$. For
each $k\in \N$ put $t_k:=\max(|a_k|,|a_k+b_k|)$
and let $\Delta_k$ be a finite subset of $C(\R,\T)$
whose restrictions to $[-t_k,t_k]$ are $\varepsilon_k$--dense in the
restriction of ${\mathfrak C}_n$ to $[-t_k,t_k]$, where
$\varepsilon_k\to 0$ as $k\to \infty$. Let $\Delta^\ast=(\Delta_k)_{k\in
\N} $, and let $\ell^\ast=(\ell_k)_{k\in \N}$ be a sequence of positive
integers.
If $(d_s)_{s\in \N}\subseteq \R$ is a dense subset and for some $m\in
\Z$ and $\varepsilon>0$ we have $(\Lambda_k)_{k\in \N}\in \bigcap_{s\in
\N} \mathcal{B}_{d_s,\Delta^\ast,m,\ell^\ast,\varepsilon,I^\ast}$,
then $\Lambda=\bigcup_{k\in \N} \Lambda_k $ is an
$(F,\R,\varepsilon)$--matching
set for every $m$--subset $F$ of $\mathfrak{C}_n$ strongly linearly independent over $\Q$.
\end{lemma}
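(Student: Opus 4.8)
The plan is to verify the matching property one target point at a time. Fix an $m$-subset $F=\{\psi_{p_1},\dots,\psi_{p_m}\}$ of $\mathfrak{C}_n$ with $\{p_1,\dots,p_m\}$ strongly linearly independent over $\Q$, fix $a\in\R$, and aim to produce a single $x\in\Lambda=\bigcup_k\Lambda_k$ with $\psi_{p_i}(x)\in\psi_{p_i}(a)\cdot\mathcal{V}_\varepsilon$ for every $i$; this $x$ will be located inside one suitably large $\Lambda_k\subseteq I_k$. Two approximations are set up first. I would replace the target $a$ by a nearby point $d_s$ of the dense sequence so that, by continuity of the \emph{fixed finite} family $\psi_{p_i}$ at the single point $a$, one has $\psi_{p_i}(d_s)\in\psi_{p_i}(a)\cdot\mathcal{V}_\eta$ with $\eta$ as small as desired; and I would replace $F$ by an $m$-subset $F_k=\{\phi_1^{(k)},\dots,\phi_m^{(k)}\}\subseteq\Delta_k$ whose members are within $\varepsilon_k$ of the $\psi_{p_i}$ on $[-t_k,t_k]$, which is possible since $\Delta_k$ is $\varepsilon_k$-dense in the restriction of $\mathfrak{C}_n$ to that interval (for small $\varepsilon_k$ the $\phi_i^{(k)}$ are distinct, as the $\psi_{p_i}$ are, so indeed $|F_k|=m$).

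First I would show $F_k\in\mathfrak{P}_{I_k,d_s,m,\varepsilon}$ for all large $k$. Since $\limsup b_k=+\infty$ and $2t_k\geq b_k$, along a subsequence both $\lambda_\R(I_k)=b_k\to\infty$ and $t_k\to\infty$, so eventually $d_s\in[-t_k,t_k]$ and $I_k$ is as long as Theorem~\ref{li} requires. That theorem, applied to the strongly linearly independent $F$, gives $\lambda_\R\!\left(N^{\vartriangleleft}(F,\delta,d_s)\cap I_k\right)\geq\left((2\delta)^m-\gamma\right)\lambda_\R(I_k)$ for any preassigned $\gamma>0$. Taking $\delta=\varepsilon-2\varepsilon_k$, the triangle inequality for the angular distance yields, on $I_k$, the inclusion $N^{\vartriangleleft}(F,\varepsilon-2\varepsilon_k,d_s)\cap I_k\subseteq N^{\vartriangleleft}(F_k,\varepsilon,d_s)$, since each $\phi_i^{(k)}$ differs from $\psi_{p_i}$ by at most $\varepsilon_k$ at both $t\in I_k$ and $d_s$. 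As $\varepsilon_k\to0$ and $\gamma$ is free, the factor $\left((2(\varepsilon-2\varepsilon_k))^m-\gamma\right)$ exceeds $\varepsilon^m$ for $k$ large, so $\lambda_\R\!\left(N^{\vartriangleleft}(F_k,\varepsilon,d_s)\cap I_k\right)\geq\varepsilon^m\lambda_\R(I_k)$, i.e.\ $F_k\in\mathfrak{P}_{I_k,d_s,m,\varepsilon}$.

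Next I would feed this into the hypothesis. Because $(\Lambda_k)\in\mathcal{B}_{d_s,\Delta^\ast,m,\ell^\ast,\varepsilon,I^\ast}$, there is $N_s$ with $\Lambda_k\in\mathcal{A}_{d_s,\Delta_k,m,\ell_k,\varepsilon,I_k}$ for all $k\geq N_s$. Choosing $k\geq N_s$ in the subsequence above, the defining property of $\mathcal{A}_{d_s,\Delta_k,m,\ell_k,\varepsilon,I_k}$ applied to the competitor $F_k\subseteq\Delta_k$, which we have just placed in $\mathfrak{P}_{I_k,d_s,m,\varepsilon}$, produces a point $x\in\Lambda_k\cap N^{\vartriangleleft}(F_k,\varepsilon,d_s)$, that is, $\phi_i^{(k)}(x)\in\phi_i^{(k)}(d_s)\cdot\mathcal{V}_\varepsilon$ for every $i$. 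Unwinding the two approximations by the triangle inequality gives $\psi_{p_i}(x)\in\psi_{p_i}(a)\cdot\mathcal{V}_{\varepsilon+2\varepsilon_k+\eta}$ for every $i$, which is the matching estimate at $a$.

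The hard part will be this error bookkeeping, and it splits into two reconciliations worth keeping separate. The replacement of $F$ by $F_k$ costs $2\varepsilon_k$ and is absorbed at the level of \emph{measure} by the gap between the well-distribution bound $(2\delta)^m$ and the weaker threshold $\varepsilon^m$ demanded by $\mathfrak{P}$; this is exactly why the strong constant in Theorem~\ref{li} is needed. The replacement of $a$ by $d_s$ costs $\eta$ and is handled by ordinary continuity at the one point $a$, so that no uniform modulus over $\R$ is invoked, which matters because the chirps $\psi_{p_i}$ fail to be uniformly continuous on $\R$ once $\deg p_i\geq2$. The delicate point is that for each fixed pair $(k,s)$ these corrections push the effective radius slightly above $\varepsilon$; the stated radius $\varepsilon$ is then recovered by driving $\varepsilon_k\to0$ along the subsequence and $d_s\to a$, and this is harmless for the intended use, where the construction is ultimately run for every $\varepsilon>0$.
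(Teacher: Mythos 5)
Your architecture is essentially the paper's: approximate the target $a$ by a point $d_s$ of the dense sequence, approximate $F$ by an $m$-subset $F_k\subseteq\Delta_k$, use Theorem~\ref{li} to place the approximating family in the class $\mathfrak{P}$, and then invoke the event $\mathcal{A}_{d_s,\Delta_k,m,\ell_k,\cdot,I_k}$ to extract a witness in $\Lambda_k$. All of that bookkeeping (the inclusion $N^{\vartriangleleft}(F,\varepsilon-2\varepsilon_k,d_s)\cap I_k\subseteq N^{\vartriangleleft}(F_k,\varepsilon,d_s)$, the margin $2^m\varepsilon^m>\varepsilon^m$ absorbing $\gamma$ and $\varepsilon_k$ at the level of measure, the attention to $|F_k|=m$) is sound. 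The genuine gap is the final step, and you half-acknowledge it: every witness $x$ you produce is only guaranteed to satisfy $\psi_{p_i}(x)\in\psi_{p_i}(a)\cdot\mathcal{V}_{\varepsilon+2\varepsilon_k+\eta}$, and $\varepsilon+2\varepsilon_k+\eta>\varepsilon$ for every admissible pair $(k,s)$. ``Driving $\varepsilon_k\to0$ and $d_s\to a$'' does not repair this: the witness $x=x_{k,s}$ changes with $(k,s)$, and a family of points satisfying bounds $\varepsilon+2\varepsilon_k+\eta_s\downarrow\varepsilon$ need not contain a single point satisfying the strict bound $<\varepsilon$ (they could all lie at angular distance exactly $\varepsilon+\varepsilon_k$ from the target, say). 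What you have actually proved is that $\Lambda$ is an $(F,\R,\varepsilon')$-matching set for every $\varepsilon'>\varepsilon$, which is strictly weaker than the stated conclusion; your closing remark that this is ``harmless for the intended use'' is a comment about Theorem~\ref{BD-R}, not a proof of the lemma.

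The paper closes exactly this gap by budgeting the error \emph{in advance}: it fixes $N$ with $\frac{3}{N}<\varepsilon$ and runs the entire argument at the finer scale $\frac{1}{N}$ --- it invokes the events $\mathcal{B}_{d_{s_0},\Delta^\ast,m,\ell^\ast,\frac{1}{N},I^\ast}$ and the class $\mathfrak{P}_{I_k,d_{s_0},m,\frac{1}{N}}$, choosing $\delta<\frac{1}{N}<2\delta$, $(2\delta)^m-\gamma>N^{-m}$ and $\varepsilon_k<\frac{1}{2}\bigl(\frac{1}{N}-\delta\bigr)$ --- so that the error sources total less than $\frac{3}{N}<\varepsilon$ and the witness lands strictly inside $\psi_{p_i}(x_0)\cdot\mathcal{V}_\varepsilon$. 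Be aware that this device consumes a hypothesis at scale $\frac{1}{N}$ rather than at scale $\varepsilon$; your difficulty faithfully reflects the fact that the lemma's hypothesis, read literally, only supplies the events at scale $\varepsilon$ (a mismatch between the paper's statement and its own proof, harmless in the application, where Theorem~\ref{BD-R} provides the events at every scale $\frac{1}{N}$). The correct repair of your argument is therefore not a limiting one, but either (i) invoking the hypothesis at a finer scale (say $\frac{\varepsilon}{4}$, or all scales $\frac{1}{N}$), as the paper implicitly does, or (ii) weakening the conclusion to $\varepsilon'$-matching for all $\varepsilon'>\varepsilon$ and adjusting the statement accordingly.
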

\begin{proof}
Let $F=\{\psi_{p_1},\ldots \psi_{p_m}\}\subseteq \mathfrak{C}_n$ with
$\{p_1,\ldots,p_m\}\subseteq \R[x]$ strongly linearly independent over
$\Q$. Fix $x_0\in \R$ and consider  $N\in \N$ and  $s_0 \in \N$ with
$\frac 3 N<\varepsilon$ and
\begin{equation}\label{dense1}
d_a \bigl(\psi_{p_j}(d_{s_0}),\psi_{p_j}(x_0 )\bigr)<\frac{1}{N},\mbox{ for } j=1,\dots,m.
\end{equation}
Since $(\Lambda_k)_k\in \mathcal{B}_{d_{s_0},\Delta^\ast,
m,\ell^\ast,\frac{1}{N},I^\ast}$, there is $k_0$ such that for every $k\geq k_0$,
\begin{equation}\label{eq:Lamb1}
\Lambda_k\in \mathcal{A}_{d_{s_0},\Delta_k,m,\ell_k,\frac{1}{N},I_k}.
\end{equation}
Applying Theorem \ref{li} to $\delta,\gamma>0$ such that
$\delta<\frac{1}{N}<2\delta$ and $(2\delta)^m-\gamma>N^{-m}$, we
can find $ k\geq k_0$ large enough to satisfy (\ref{eq:Lamb1}),
$\varepsilon_k<\frac 1 2(\frac 1 N-\delta)$, $b_k>L(F,\gamma)$ and
$d_{s_0}\in [-t_k,t_k]$. Since $\Delta_k$ is $\varepsilon_k$--dense in
the restriction of ${\mathfrak C}_n$ to $[-t_k,t_k]$, for every
$j=1,\dots,m$ there exists $\phi_j \in \Delta_k $ such that for every
$t\in [-t_k,t_k]$,
\begin{equation}\label{ddense1}
d_a\left(\phi_j(t),\psi_{p_j}(t)\right)<
\varepsilon_k<\frac{1}{2}\left(\frac{1}{N}-\delta\right).
\end{equation}
From (\ref{dense1}) and (\ref{ddense1}) it follows that, for $t\in
N^{\vartriangleleft}\left(F,\delta,d_{s_0}\right)\cap I_k$, we have
\[
d_a\left(\phi_j(t),\phi_j(d_{s_0})\right)\le d_{a}\left(
\phi_j(t),\psi_{p_j}(t)\right)+
d_a\left(\psi_{p_j}(t),\psi_{p_j}(d_{s_0})\right)+
d_a\left(\psi_{p_j}(d_{s_0}),\phi_j(d_{s_0})\right)<\frac{1}{N},
\]
thus $N^{\vartriangleleft}\left(F,\delta,d_{s_0}\right)\cap I_k\subseteq
N^{\vartriangleleft}(\tilde{F},\frac{1}{N},d_{s_0})\cap I_k$,
where $\tilde{F}:=\{\phi_1,\dots,\phi_m\}$. Since $b_k>L(F,\gamma)$
implies $\lambda_\R(N^{\vartriangleleft}(F,\delta,d_{s_0})\cap
I_k)\geq N^{-m}\lambda_\R(I_k)$, we conclude that
$\tilde{F}\in \mathfrak{P}_{I_k,d_{s_0},m,\frac{1}{N}}$,
which together with \eqref{eq:Lamb1} implies $\Lambda_k\cap
N^{\vartriangleleft}(\tilde{F},\frac{1}{N},d_{s_0})\neq \varnothing$. From
\eqref{dense1} and \eqref{ddense1} we then obtain that for $y\in
\Lambda_k\cap N^{\vartriangleleft}(\tilde{F},\frac{1}{N},d_{s_0})$,
\begin{align*}
d_a\left( \psi_{p_j}(y),\psi_{p_j}(x_0)\right)&\leq
d_a\left( \psi_{p_j}(y),\phi_j(y)\right)+d_a\left(
\phi_j(y),\phi_j(d_{s_0})\right)\\
&+ d_a\left( \phi_j(d_{s_0}), \psi_{p_j}(d_{s_0})\right)+
d_a\left(\psi_{p_j}(d_{s_0}),\psi_{p_j}(x_0)\right)<\frac 3 N,
\end{align*}
i.e. $\psi_{p_j}(y)\in \psi_{p_j}(x_0)\cdot
\mathcal{V}_{\frac{3}{N}}\subseteq \psi_{p_j}(x_0)\cdot
\mathcal{V}_{\varepsilon}$. In conclusion, $\Lambda$ is an
$(F,\R,\varepsilon)$--matching set.
\end{proof}
\begin{theorem}
\label{BD-R}
Let $I^\ast=(I_k)_{k\in \N}$ be a sequence of intervals
$I_k=[a_k,a_k+b_k]\subseteq \R$ with $\limsup(b_k)_{k\in \N}=+\infty$
and $t_k:=\max(|a_k|,|a_k+b_k|)\geq
k$ for every $k\in \N$. For each $k\in \N$ let $\Lambda_k\subseteq I_k$
be a random subset with $|\Lambda_k|=\ell_k$ and $\ell_k\neq O(\log
t_k)$. Then, for any fixed $n\in \N$, the set $\Lambda=\bigcup_{k\in
\N} \Lambda_k$ is, almost surely, a set of uniqueness for
$\ap_{\mathfrak{C}_{n}}$.
\end{theorem}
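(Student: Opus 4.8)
The plan is to run everything through the matching criterion of Corollary \ref{mat->dense}: since $\ap_{\mathfrak{C}_n}=\ap_{\mathfrak{C}_n}(\R)$, it suffices to produce a single full-measure event on which $\Lambda=\bigcup_k\Lambda_k$ is an $(F,\R,\varepsilon)$-matching set for \emph{every} finite $F\subseteq\mathfrak{C}_n$ and every $\varepsilon>0$. The essential difficulty is that there are uncountably many such $F$, so I would first collapse the problem to a countable family of probability-one events using Lemmas \ref{lem:BD-R} and \ref{firtsglobal}, which treat the strongly linearly independent families uniformly, and only afterwards transfer the conclusion to arbitrary finite families by a purely algebraic reduction.

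First I would fix the auxiliary data. Choose a countable dense set $(d_s)_{s\in\N}\subseteq\R$, a sequence of heights $R_k\to\infty$ and meshes $\varepsilon_k\to0$ (for instance $R_k=t_k$ and $\varepsilon_k=1/\log t_k$, which is legitimate because $t_k\ge k$ forces $t_k\to\infty$), and let $\Delta_k$ be a finite $\varepsilon_k$-net, in the uniform angular metric on $[-t_k,t_k]$, of the chirps $\psi_p$ with $\deg p\le n$ and all coefficients bounded by $R_k$. Discretising each coefficient $a_j$ on a grid of spacing comparable to $\varepsilon_k/t_k^{\,j}$ shows such a net exists with $L_k:=|\Delta_k|$ obeying $\log L_k=O(\log t_k)$. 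Although the full restriction of $\mathfrak{C}_n$ to $[-t_k,t_k]$ is not totally bounded (fast oscillation destroys equicontinuity), this is harmless: the proof of Lemma \ref{lem:BD-R} only ever compares against the finitely many \emph{fixed} functions $\psi_{p_1},\dots,\psi_{p_m}$, whose coefficient heights eventually fall below $R_k$, so approximating the bounded-height chirps is all that is ever used.

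Next I would verify, for each fixed $m\in\N$ and $N\in\N$, the growth hypothesis \eqref{condition1} of Lemma \ref{firtsglobal} with $\varepsilon=1/N$. Its right-hand side is a fixed multiple of $\log k$, while $\tfrac{m\log L_k}{\varepsilon^m}=O(\log t_k)$; since $t_k\ge k$ gives $\log k\le\log t_k$, the condition reduces to $\ell_k>C_{m,N,\gamma}\log t_k$ for all large $k$, which holds because $\ell_k\neq O(\log t_k)$ forces $\ell_k/\log t_k\to\infty$ (after, if necessary, passing to the subsequence of indices carrying a super-logarithmic sample count and long intervals, discarding indices only shrinks $\Lambda$ and cannot destroy a matching property). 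Hence $\P\bigl(\mathcal{B}_{d_s,\Delta^\ast,m,\ell^\ast,1/N,I^\ast}\bigr)=1$ for all $s,m,N$, and the countable intersection over $(s,m,N)$ still has probability one. On this event Lemma \ref{lem:BD-R} (whose only $F$-dependent input, the length $L(F,\gamma)$ of Theorem \ref{li}, is supplied by $\limsup b_k=+\infty$) yields, for every $m$ and $N$, that $\Lambda$ is an $(F,\R,1/N)$-matching set for every strongly linearly independent $m$-subset $F\subseteq\mathfrak{C}_n$; since matching at scale $1/N$ implies matching at any $\varepsilon\ge 1/N$, this gives matching at every scale for every strongly linearly independent finite $F$.

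Finally I would remove the independence hypothesis. Given an arbitrary finite $F=\{\psi_{p_1},\dots,\psi_{p_m}\}\subseteq\mathfrak{C}_n$, let $\bar p_1,\dots,\bar p_m$ be the images of the $p_j$ in the quotient $\R[x]/\R_0[x]$ of polynomials modulo constants, and let $\Gamma$ be the subgroup they generate. As a finitely generated torsion-free abelian group, $\Gamma$ is free of some rank $r\le n$, with a $\Z$-basis lifting to polynomials $g_1,\dots,g_r$ that are strongly linearly independent over $\Q$ and integers $n_{ji}$ for which each $p_j-\sum_i n_{ji}g_i$ is constant. Setting $G=\{\psi_{g_1},\dots,\psi_{g_r}\}\subseteq\mathfrak{C}_n$ and $M_F=\max_j\sum_i|n_{ji}|$, any point $x$ matching $a$ for $G$ at scale $\delta$ matches it for $F$ at scale $M_F\delta$, because $p_j(x)-p_j(a)=\sum_i n_{ji}\bigl(g_i(x)-g_i(a)\bigr)$ modulo $1$. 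Applying the matching already established for the strongly independent $G$ at scale $\varepsilon/M_F$ therefore upgrades to $(F,\R,\varepsilon)$-matching, and Corollary \ref{mat->dense} with $\mathfrak{J}=\mathfrak{C}_n$ concludes that $\Lambda$ is almost surely a set of uniqueness for $\ap_{\mathfrak{C}_n}$. I expect the main obstacle to be the construction of the approximating families $\Delta_k$: one must make $|\Delta_k|$ small enough that $\log L_k=O(\log t_k)$ keeps the required sampling rate inside the admissible regime $\ell_k\neq O(\log t_k)$, while still capturing every fixed chirp — it is precisely the failure of total boundedness of $\mathfrak{C}_n$ on a bounded interval that compels the growing truncation $R_k\uparrow\infty$ and makes this balance delicate.
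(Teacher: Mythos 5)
Your proposal follows the paper's own proof essentially step for step: the same coefficient-grid nets $\Delta_k$ with $\log|\Delta_k|=O(\log t_k)$, the same countable intersection over $(s,m,N)$ of probability-one events supplied by Lemma \ref{firtsglobal} applied along the subsequence of indices with super-logarithmic $\ell_k$ (the paper's sets $\mathcal{K}_{N,m}$), the same appeal to Lemma \ref{lem:BD-R} for strongly linearly independent families, and the same integer-combination reduction to arbitrary finite $F\subseteq\mathfrak{C}_n$ before invoking Corollary \ref{mat->dense}. Your bounded-height repair of the net construction is in fact a slightly more careful rendering of the paper's Step 2 (which, as written, asserts $\varepsilon_k$-density of $\Delta_k$ in all of $\mathfrak{C}_n$ restricted to $[-t_k,t_k]$, a claim that fails for chirps with coefficients exceeding $t_k$ but is only ever needed for the finitely many fixed chirps once $k$ is large), so the two arguments are substantively identical.
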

\begin{proof}
We divide this proof into Steps.
We first determine  that a certain number
of conditions in the selection of the $\Lambda_k$'s are satisfied with
probability one and then we show that, when the sets $\Lambda_k$ meet these
conditions, then the set
$\Lambda=\bigcup_{k\in \N} \Lambda_k$ is a set of uniqueness for $\ap_{\mathfrak{C}_{n}}$.

\textbf{Step 1}: \emph{Exhibiting an event $\mathcal{B}$ of probability one.}\\
Fix $n\in \N$. Since $\ell_k\neq O\bigl(\log t_k\bigr)$, the set
\[
\textstyle{\mathcal{K}_{N,m}=\left\{k\in \N\colon\ell_k\geq (n+1)mN^m\log 2+ \left[mN^{m}\frac{(n+
  4)(n+1)}{2}-\frac{2}{\log\left(1-N^{-m}\right)}
  \right]\log t_k\right\}}
\]
is infinite for each $N,m\in \N$. For each $k\in \N$ define
\[\widetilde{\Delta_k}=\left\{\sum_{r=0}^n
\frac{j_r}{t_k^{r+1}}x^{r}\colon -\floor{t_k^{r+2}}\leq j_r<
\floor{{t_k}^{r+2}},j_r\in\Z,r=0,\dots,n \right\} \subseteq \R_n[x],\]
and $\Delta_k=\left\{\psi_{p}\colon p\in \widetilde{\Delta_k}\right\}$. Observe that $|\Delta_k|=\prod_{r=0}^n(2\floor{t_k^{(r+2)}})\leq 2^{n+1}
t_k^{\frac{(n+1)(n+4)}{2}}$. Fix  $m, N\in \N$.  If $k\in  \mathcal{K}_{N,m}$, then
\begin{align*}
\ell_k&\geq  (n+1)mN^m\log 2+ \left[mN^{m}\frac{(n+
  4)(n+1)}{2}-\frac{2}{\log\left(1-N^{-m}\right)}
  \right]\log t_k
  \\
&\geq m N^m \log |\Delta_k|-\frac{2}{\log\left(1-N^{-m}\right)}\log k,
\end{align*}
and Lemma \ref{firtsglobal} applied to the sequences
$I^{\ast}_{N,m}=(I_k)_{k \in \mathcal{K}_{N,m}}$,
$\Delta^\ast_{N,m}=(\Delta_k)_{k \in \mathcal{K}_{N,m}}$
and $\ell^\ast_{N,m}=(\ell_k)_{k \in \mathcal{K}_{N,m}}$,  yields then
that for every $q\in\R$ the event $\mathcal{B}_{q,\Delta^\ast_{N,m},m,
\ell_{N,m}^\ast,\frac{1}{N},I_{N,m}^\ast}$ occurs with probability
one. Therefore, if $(d_s)_{s\in \N}\subseteq \R$ denotes a countable
dense subset, the event
\[\mathcal B=\bigcap_{s\in \N}\bigcap_{m\in \N}\bigcap_{N  \in \N}
\mathcal{B}_{d_s,\Delta^\ast_{N,m},
m,\ell_{N,m}^\ast,\frac{1}{N},I_{N,m}^\ast},\]
also occurs with probability one.

\textbf{Step  2:} The set \emph{$\Delta_k$ is $\frac{n+1}{t_k}$--dense in the restrictions of
$\mathfrak{C}_n$ to $[-t_k,t_k]$.}\\
Consider any element $f(x)=e^{2\pi ip(x)}$ of $\mathfrak{C}_n$ with
$p(x)=\sum_{r=0}^n a_rx^r\in\R[x]$.
For each $r\in\{0,\dots,n\}$ find $j_r\in\Z$ such that
$\Big|a_r-\frac{j_r}{t_k^{r+1}}\Big|\le\frac{1}{t_k^{r+1}}$.
Then, the polynomial $q(x)=\sum_{r=0}^n\frac{j_r}{t_k^{r+1}}x^r$ is such that
$h(x)=e^{2\pi i q(x)}\in\Delta_k$, and for every $x\in [-t_k,t_k]$ we obtain
\[d_a(h(x),f(x))\le|p(x)-q(x)|\le\sum_{r=0}^n\bigg|a_r-\frac{j_r}{t_k^{r+1}}\bigg|\cdot
|x|^r
\le\sum_{r=0}^n\frac{|x|^r}{t_k^{r+1}}\le\sum_{r=0}^n\frac{t_k^r}{t_k^{r+1}}=
\frac{n+1}{t_k}.\]

\textbf{Step 3}: \emph{If $(\Lambda_k)_{k\in \N}\in \mathcal{B}$, then $
\Lambda=\bigcup_{k \in  \N}\Lambda_k$ is $(F,\R,\varepsilon)$--matching
for  every $F\subseteq \mathfrak{C}_n$ induced by  a finite family of
polynomials that is strongly linearly independent over $\Q$ and every
$\varepsilon>0$.}\\
For such a set $\Lambda$, fix $\varepsilon>0$ and let $F=\{\psi_{p_1},\ldots, \psi_{p_s}\}$
 with $\{p_1,\ldots,p_s\}\subseteq \R_n[x]$. strongly linearly
independent over $\Q$. Fix $N\in \N$ with  $\frac{1}{N}<\varepsilon$. By
our Step 2 above,  Lemma \ref{lem:BD-R} applies  to the sequences
$I^\ast_{N,m}$, $\Delta^\ast_{N,m}$ and $\ell^\ast_{N,m}$ and shows that
$\Lambda$ is an $(F,\R,\varepsilon)$ matching set.

\textbf{Step 4}: \emph{If $(\Lambda_k)_{k\in \N}\in \mathcal{B}$, then
$\Lambda=\bigcup_{k \in  \N}\Lambda_k$ is $(F,\R,\varepsilon)$--matching
for  every finite set $F\subseteq \mathfrak{C}_n$ and every
$\varepsilon>0$.}\\

Fix  $\varepsilon>0$ and let $F=\{\psi_{p_1},\ldots, \psi_{p_s}\}$
with $\{p_1,\ldots,p_s\}\subseteq \R_n[x]$. We may assume that
$\{p_1,\ldots,p_m\}$ are strongly linearly
independent over $\Q$ and that, for each $j=m+1,\ldots, s$, there is a
constant $C_j\in \R$ and there are  integers $Q$ and $Z_{ij}$,
$1\leq i\leq m$ with
\[
p_j= C_j- \sum_{i=1}^{m} \frac{Z_{ij}}{Q}p_i.
\]
Let us define $M=\max\{\sum_{i=1}^m |Z_{ij}|: j\in [m+1,N]\}$ and $\tilde{\varepsilon}=\varepsilon/M$. Fix $N\in \N$ with  $\frac{1}{N}<\tilde{\varepsilon}$.

Since the family  $\tilde{F}=\{p_1/Q,\ldots,p_m/Q\}$ is  strongly
linearly independent over $\Q$, Step 3 above shows that  $\Lambda$
is an $(\tilde{F}, \R,\tilde{\varepsilon})$--matching set. We next see
that $\Lambda$ is also an $(F, \R,\varepsilon)$--matching set.

Let  $a\in \R$. Since $\Lambda$ is an $(\tilde{F},
\R,\tilde{\varepsilon})$--matching set, there is $x_a\in \Lambda_0$
such that
$x_a\in N^{\vartriangleleft}(\tilde{F},\tilde{\varepsilon},a)$,
that is, for each $i=1,\ldots, m$ there are $\delta_i$ with
$|\delta_i|<\tilde{\varepsilon}$ and $M_i\in \Z$ such that
\[ \frac{p_i}{Q}(x_a)-\frac{p_i}{Q}(a)=\delta_i+M_i.\]

Then, for $j=m+1,\ldots, s$,
\[
p_j(x_a)-p_j(a)=\sum_{i=1}^m Z_{ij} \left[\frac{p_i}{Q}(x_a)-
\frac{p_i}{Q}(a)\right]=\sum_{i=1}^m Z_{ij} (\delta_i+M_i).
\]
Since $\sum_{i=1}^{m}Z_{ij}M_i\in \Z$ and
$\left|\sum_{i=1}^{m}Z_{ij}\delta_i\right|\leq \varepsilon$, we see that
$\psi_{p_j}(x_a)\in \psi_{p_j}(a)\cdot \mathcal{V}_\varepsilon$ for every
$j=m+1,\ldots, s$. The same conclusion being obvious for $j=1,\ldots,m$,
it follows that $x_a\in N^{\vartriangleleft}\left(F,\varepsilon,a\right)$,
as we wanted to show.

Having proved that $\Lambda$ is an $(F,\R,\varepsilon)$--matching set for
every $\varepsilon>0$ and every finite set $F\subseteq \mathfrak{C}_{n}$,
an application of Corollary \ref{mat->dense} then concludes the proof.
\end{proof}
In the case of $\ap(\R)$, we also obtain almost sure density.
\begin{theorem}
\label{BD-Rdense}
Let $I^\ast=(I_k)_{k\in \N}$ be a sequence of intervals
$I_k=[a_k,a_k+b_k]\subseteq \R$ with $\limsup(b_k)_{k\in \N}=+\infty$
and $t_k:=\max(|a_k|,|a_k+b_k|)\geq k$ for every $k\in \N$. For each $k\in
\N$ let $\Lambda_k\subseteq I_k$ be a random subset with $|\Lambda_k|\neq
O(\log t_k)$. Then, almost surely, $\Lambda=\bigcup_{k\in \N} \Lambda_k$
is dense in $\R^\ap$.
\end{theorem}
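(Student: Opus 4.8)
The plan is to obtain this as a direct consequence of Theorem \ref{BD-R} in the lowest-degree case $n=1$, so that essentially all the probabilistic work has already been done; the only genuinely new ingredient will be the passage from \emph{uniqueness} to \emph{Bohr-density}. First I would record the elementary identification $\ap_{\mathfrak{C}_1}(\R)=\ap(\R)$. Every element of $\mathfrak{C}_1$ is of the form $\psi_p$ with $p(t)=\tau t+c$, so $\psi_p(t)=e^{2\pi ic}\chi_\tau(t)$ is a unimodular scalar multiple of a character. Consequently $\mathrm{span}(\mathfrak{C}_1)=\mathrm{span}(\widehat{\R})$, and taking $\|\cdot\|_\infty$-closures gives $\ap_{\mathfrak{C}_1}(\R)=\ap_{\widehat{\R}}(\R)=\ap(\R)$.

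Next I would apply Theorem \ref{BD-R} verbatim with $n=1$. The hypotheses on the intervals ($\limsup b_k=+\infty$ and $t_k\geq k$) and on the sizes of the random subsets ($|\Lambda_k|\neq O(\log t_k)$, i.e. $\ell_k:=|\Lambda_k|\neq O(\log t_k)$) are exactly those required there. Hence, almost surely, $\Lambda=\bigcup_{k\in\N}\Lambda_k$ is a set of uniqueness for $\ap_{\mathfrak{C}_1}(\R)$, which by the previous paragraph is $\ap(\R)$ itself. In other words, almost surely every $f\in\ap(\R)$ with $\restr{f}{\Lambda}=0$ vanishes identically.

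It then remains only to turn this uniqueness statement into density, and I would argue exactly as in the final step of Corollary \ref{cor:mat->dense}. The Gelfand representation identifies $\ap(\R)$ with $C(\R^\ap,\C)$, and $\R$ is dense in the compact Hausdorff space $\R^\ap$. Were $\Lambda$ not dense in $\R^\ap$, I would pick $x_0$ outside the closure of $\Lambda$ in $\R^\ap$ and, by Urysohn's Lemma, a continuous $g\colon\R^\ap\to\C$ with $g(x_0)=1$ that vanishes on that closure. Its restriction $f=\restr{g}{\R}$ is then a nonzero almost periodic function with $\restr{f}{\Lambda}=0$, contradicting the uniqueness just established; hence $\Lambda$ is dense in $\R^\ap$ with probability one. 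Since the estimates and the Borel--Cantelli argument underpinning Theorem \ref{BD-R} already carry the analytic weight, I do not anticipate any real obstacle here: the entire content of the proof is the identification $\ap_{\mathfrak{C}_1}(\R)=\ap(\R)$ together with this routine Gelfand--Urysohn conversion.
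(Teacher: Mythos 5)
Your proposal is correct and is essentially the paper's own argument: the paper states Theorem \ref{BD-Rdense} as an immediate consequence of the machinery of Theorem \ref{BD-R} (whose Step 4 gives the matching property for finite sets of characters, since $\widehat{\R}\subseteq\mathfrak{C}_1$) together with the Gelfand--Urysohn conversion already recorded in Corollary \ref{cor:mat->dense}. Your only variation is cosmetic and clean: you invoke the \emph{statement} of Theorem \ref{BD-R} with $n=1$ as a black box, justify $\ap_{\mathfrak{C}_1}(\R)=\ap(\R)$ by absorbing the unimodular constants $e^{2\pi i c}$ into the coefficients, and then repeat the Urysohn step, all of which is sound.
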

The same argument   yields the following  general version of Theorem 3.1 in \cite{katz73}.
\begin{theorem}
\label{BD-Z}
Let $I^\ast=(I_k)_{k\in \N}$ be a sequence of intervals
$I_k=[n_k,n_k+m_k]\subseteq \Z$ with $\limsup (m_k)_{k\in \N}=+\infty$
and $t_k:=\max(|n_k|,|n_k+m_k|)\geq k$ for every $k\in \N$. For each $k\in
\N$ let $\Lambda_k\subseteq I_k$ be a random subset with $|\Lambda_k|\neq
O(\log t_k)$. Then, almost surely, $\Lambda=\bigcup_{k\in \N} \Lambda_k$
is dense in $\Z^{\ap}$.
\end{theorem}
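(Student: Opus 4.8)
The plan is to mirror the proof of Theorem \ref{BD-R} (the real case) and adapt each step to $G=\Z$, since the statement asserts that the very same random construction produces Bohr--dense subsets of $\Z$. The structural skeleton is identical: exhibit an event $\mathcal{B}$ of probability one via Lemma \ref{firtsglobal}, build discrete approximating families $\Delta_k$ that are dense in the restriction of the relevant characters to $[-t_k,t_k]\cap\Z$, deduce that on $\mathcal{B}$ the union $\Lambda=\bigcup_k\Lambda_k$ is an $(F,\Z,\varepsilon)$--matching set for every finite $F\subseteq\widehat{\Z}$, and finally invoke Corollary \ref{cor:mat->dense} to conclude density in $\Z^{\ap}$. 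The crucial substitution is that the continuous well--distribution estimate Theorem \ref{li} is replaced by its integer analogue Corollary \ref{cor:li}, which supplies the same lower bound $\lambda_\Z\bigl(N^{\vartriangleleft}(F,\delta,a)\cap I\bigr)\geq\bigl((2\delta)^m-\gamma\bigr)|I|$ once $|I|$ exceeds $L(F,\gamma)$.

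First I would set up Step 1 exactly as before. Since $|\Lambda_k|\neq O(\log t_k)$, the index set $\mathcal{K}_{N,m}$ on which $\ell_k$ dominates the required combination of $\log 2$ and $\log t_k$ terms is infinite for each $N,m\in\N$; applying Lemma \ref{firtsglobal} to the subsequences indexed by $\mathcal{K}_{N,m}$ shows each $\mathcal{B}_{q,\Delta^\ast_{N,m},m,\ell^\ast_{N,m},\frac1N,I^\ast_{N,m}}$ has probability one, and hence so does the countable intersection $\mathcal{B}$ over a dense sequence $(d_s)$, all $m$, and all $N$. The one genuine difference lies in constructing $\Delta_k$. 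Over $\R$ the authors used polynomials of degree $\leq n$ with rational coefficients of controlled denominators; for $\Z$ I would instead take a finite $\varepsilon_k$--net of the single--character family $\mathfrak{J}=\widehat{\Z}\cong\T$, namely characters $\chi_{\tau}$ with $\tau$ ranging over a $\frac1{t_k}$--spaced grid of $[0,1)$, so that the restriction of $\Delta_k$ to $\{-t_k,\dots,t_k\}\cap\Z$ is $\frac{1}{t_k}$--dense (or a comparable bound) in the restriction of all characters to that range. This keeps $|\Delta_k|$ polynomial in $t_k$, which is what feeds the logarithmic growth condition (\ref{condition1}) and makes the Borel--Cantelli series converge.

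The remaining steps then transcribe verbatim with the obvious changes. The integer analogue of Lemma \ref{lem:BD-R} follows from combining the density of $\Delta_k$ with Corollary \ref{cor:li}: given a finite strongly linearly independent $F\subseteq\widehat{\Z}$, a target $x_0\in\Z$, and a grid point $d_{s_0}$ close to $x_0$ in the sense of \eqref{dense1}, one finds $\phi_j\in\Delta_k$ approximating the true characters uniformly on $[-t_k,t_k]\cap\Z$, uses Corollary \ref{cor:li} to guarantee $\tilde F\in\mathfrak{P}_{I_k,d_{s_0},m,\frac1N}$, and then membership in $\mathcal{A}_{d_{s_0},\Delta_k,m,\ell_k,\frac1N,I_k}$ forces $\Lambda_k\cap N^{\vartriangleleft}(\tilde F,\frac1N,d_{s_0})\neq\varnothing$; a triangle--inequality chain on the angular distance $d_a$ then places $\psi_{p_j}(y)$ in $\psi_{p_j}(x_0)\cdot\mathcal{V}_\varepsilon$. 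Because $\widehat{\Z}\cong\T$ already consists of honest characters (degree--one phases), the reduction from a general finite $F$ to a strongly linearly independent one is cleaner here than over $\R$: one splits off a maximal $\Q$--independent subfamily and expresses the rest as integer combinations, exactly as in Step 4 of Theorem \ref{BD-R}, rescaling $\varepsilon$ by the bound $M$ on the integer coefficients. Finally, Corollary \ref{cor:mat->dense} applied to $G=\Z$ (which is Abelian) gives that $\Lambda$ is a set of uniqueness for $\ap(\Z)$ and therefore dense in $\Z^{\ap}$.

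I expect the main obstacle to be the construction and counting of $\Delta_k$ in the integer setting: one must verify that a $\frac{1}{t_k}$--type net of characters restricted to $\{-t_k,\dots,t_k\}$ can be chosen with cardinality growing only polynomially in $t_k$, so that $\log|\Delta_k|=O(\log t_k)$ and the hypothesis $|\Lambda_k|\neq O(\log t_k)$ still suffices to satisfy (\ref{condition1}) on each $\mathcal{K}_{N,m}$. The underlying density estimate comes from Corollary \ref{cor:li} rather than Theorem \ref{li}, and one should double--check that its bound $L(F,\gamma)$ is uniform enough across the sequence and that, since $\limsup b_k=+\infty$, infinitely many intervals $I_k$ clear the threshold $|I_k|\geq L(F,\gamma)$ for each fixed $F$ and $\gamma$. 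Everything else is a faithful, and largely notational, translation of the real--variable argument.
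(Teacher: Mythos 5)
Your skeleton is exactly the paper's: the paper offers no separate proof of Theorem \ref{BD-Z}, saying only that ``the same argument'' as in Theorem \ref{BD-R} applies with Corollary \ref{cor:li} in place of Theorem \ref{li}, and your Steps 1--3 are the faithful transcription of that argument. (Your net $\Delta_k$ needs grid spacing of order $t_k^{-2}$, not $t_k^{-1}$, to be $\varepsilon_k$--dense on $\{-t_k,\dots,t_k\}$ with $\varepsilon_k\to 0$; you flag this yourself, and it is harmless since $|\Delta_k|$ stays polynomial in $t_k$, so $\log|\Delta_k|=O(\log t_k)$.)

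The genuine gap is in your Step--4 analogue, precisely at the point you declare ``cleaner'' over $\Z$. Over $\R$, any $\Q$--linearly independent family $\{\tau_1,\dots,\tau_m\}$ gives a continuously well distributed $(\tau_1t,\dots,\tau_mt)$, rationals included, so Theorem \ref{li} applies to a maximal $\Q$--independent subfamily. Over $\Z$ this fails: $(n\tau)_{n}$ is well distributed modulo $1$ only for $\tau\notin\Q$, and joint well distribution needs $\{1,\tau_1,\dots,\tau_m\}$ to be $\Q$--independent, not merely $\{\tau_1,\dots,\tau_m\}$. Concretely, $F=\{\chi_{1/p}\}$ is strongly linearly independent in the sense of Definition \ref{def:pli}, yet $(n/p)_n$ is periodic, $N^{\vartriangleleft}(F,\delta,a)\cap I$ is the congruence class $a+p\Z$ when $\delta<1/p$ (density $1/p$), so the bound of Corollary \ref{cor:li} is violated for $\delta$ near $1/p$; and in your analogue of Lemma \ref{lem:BD-R} the required bound $N^{-1}$ fails whenever $p>N$ (e.g. $N=10$, $p=11$, $\delta=0.09$). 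Thus you cannot feed a maximal $\Q$--independent subfamily into Corollary \ref{cor:li}: when the $\Q$--span of the frequencies meets $\Q$ nontrivially, that subfamily is not well distributed, and the leftover characters involve honest rational characters $\chi_{c/Q}$, whose matching forces $x_a\equiv a \pmod{Q}$ --- a congruence condition that no amount of matching of the irrational part delivers. The repair is to split $F$ into a subfamily $\{\tau_1,\dots,\tau_m\}$ with $\{1,\tau_1,\dots,\tau_m\}$ $\Q$--independent plus rational characters of common denominator $Q$, to choose $N$ larger than $Q$ (the structure of the event $\mathcal{B}$ permits this, since it intersects over all $N$), and to establish the density estimate for the resulting mixed family; this is exactly what the rational--character results, Theorem \ref{mfetnei1} and Corollaries \ref{mfetnei} and \ref{mfetneisimpl}, are there for, and they are the ingredient your proposal never uses.
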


\section{Bohr--dense sets with special properties}
\label{t-set}

The estimates of Section \ref{sec:appdense} can be easily used to find
Bohr--dense sets with special properties, as long as the conditions
imposed in Lemma \ref{firtsglobal} and Theorem \ref{BD-R} leave enough
room for a random subset to satisfy the required properties.

In this section  we focus on  interpolation properties of sets.
For a given algebra $\mathcal{A}\subseteq C(G,\C)$,  with $G$ a
topological group, a subset $X\subseteq G$ is said to be an \emph{
$\mathcal{A}$--interpolation set } if every bounded function $f\colon
X\to \C$ admits a continuous extension $\tilde{f}\colon G\to \C$ with
$\tilde{f}\in \mathcal{A}$.
We consider here a class of sets of interpolation for the algebra
$\wap(G) $ of weakly almost periodic functions on $G$. One of the
important features of this class is that the interpolation properties
of its members are not a part of its definition, which focuses on its
combinatorial side.
\begin{definition}
Let $G$ be a  topological group. A subset $E$ of $G$ is a t--set if for
every $g\in G$, $g\neq 0$, the intersection $E\cap (E+g)$ is relatively
compact.
\end{definition}
The class of $t$--sets was introduced by
Rudin in \cite{rudin59} where he  proved that  every function supported
on  a $t$--set of a  discrete group is automatically weakly almost
periodic, see \cite{filagali13} for further references on this topic
including the definition of weakly almost periodic function. In the
terminology of \cite{filagali13}, t--sets in LCA groups are (approximable)
$\wap(G)$--interpolation sets.

Sets of interpolation for the Fourier--Stieltjes algebra $B(G)$
(consisting of Fourier--Stieltjes transforms of measures on
$\widehat{G}$) are known as \emph{Sidon sets} and have been heavily
studied (see the monographs \cite{grahhare13} and \cite{lopeross}). Since
$B(G)\subseteq \wap(G)$,
we have that both  Sidon sets and  t--sets are sets  of interpolation
for the  algebra of weakly almost periodic functions. An interesting
observation is that every Sidon set can be decomposed as a finite
union of t--sets (see \cite[Corollary 6.4.7]{grahhare13}).

As already noted in \cite{katz73},  the random process of Theorems
\ref{BD-R} and \ref{BD-Z} cannot be adapted to yield
Sidon sets.  It is well known, see e.g.  \cite[Corollary
6.3.13]{grahhare13} that a length $N$ interval contains at most
$C_E\log N$ elements of a Sidon set $E$. This route has however
proved to be fruitful with other less demanding properties: Neuwirth
\cite{neuw99}, for instance, obtains dense subsets of $\Z^\ap$ that
are $\Lambda(p)$ for every $p$, and Li, Queff\'elec and Rodr\'\i guez-Piazza
\cite{liqueffrodr02} have obtained dense subsets of $\Z^\ap$ that are
$p$--Sidon for every $p>1$.
We do not need these concepts here and refer to
\cite{neuw99,liqueffrodr02,grahhare13} for their proper definitions. It
suffices to say that Sidon sets are $p$--Sidon  for every
$p>1$ and  $\Lambda(p)$--sets for every $p$.

While, as mentioned, our construction does not work with Sidon sets,
it does work with the important class of $t$--sets. We show in this
section that t--sets that are dense in $G^\ap$ do exist for $G=\Z$ and
$G=\R$ and that, indeed, random constructions in the spirit of Lemma
\ref{firtsglobal} lead almost surely to t--sets that are dense in $G^\ap$.

We first need a lemma that helps in recognizing t--sets.  For a subset
$\Lambda\subseteq \R$, we define its  \emph{step
length} as \[  \stl(\Lambda)=\inf\left\{|z-z^\prime|\colon z,z^\prime\in
\Lambda, z\neq z'\right\}.\]

\begin{lemma}\label{stl}
Let $([a_k,b_k])_{k\in \N}$ be a sequence of intervals in $\R$, and for
every $k\in \N$, let $\Lambda_k$ be a finite subset of $[a_k,b_k]$. If
\begin{enumerate}
\item the sequence of gaps $(a_{k+1}-b_k)_{k\in \N}$ is increasing and unbounded, and
\item \label{it:2} there exists $k_0$ such that
$\stl(\Lambda_k)>b_{k-1}-a_{k-1}$ for every $k\geq k_0$;
\end{enumerate}
then the set $\Lambda=\bigcup_{k\in \N} \Lambda_k$ is a  t--set.
\end{lemma}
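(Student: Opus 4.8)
The plan is to verify the defining property of a t--set directly: for each $g\neq 0$ the set $\Lambda\cap(\Lambda+g)$ must be relatively compact, which in $\R$ simply means bounded. Since $\Lambda\cap(\Lambda+g)=\bigl(\Lambda\cap(\Lambda-g)\bigr)+g$, I may assume $g>0$, and it suffices to show that only finitely many $x\in\Lambda$ admit $x-g\in\Lambda$; equivalently, that $\Lambda$ contains only finitely many pairs of points at distance $g$.

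First I would use hypothesis (1) to localise such pairs. Because the gaps $a_{k+1}-b_k$ increase to $+\infty$, I can fix $k_1\ge k_0$ with $a_k-b_{k-1}>g$ for all $k\ge k_1$. Then, if $x\in\Lambda_k$ with $k\ge k_1$ and $x-g\in\Lambda$, the point $x-g$ satisfies $b_{k-1}<a_k-g\le x-g<x\le b_k$, so it lies strictly between $b_{k-1}$ and $b_k$; since (once the gaps are positive) the intervals are ordered and disjoint, $\Lambda$ meets $(b_{k-1},b_k)$ only in $\Lambda_k$, forcing $x-g\in\Lambda_k$. Thus beyond the index $k_1$ every distance-$g$ pair has both of its points in a single cell $\Lambda_k$.

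Next I would invoke hypothesis (2) to exclude such same-cell pairs for large $k$. A pair of distinct points of $\Lambda_k$ at distance $g$ forces $\stl(\Lambda_k)\le g$, whereas (2) gives $\stl(\Lambda_k)>b_{k-1}-a_{k-1}$; combined with $g\le \mathrm{diam}(\Lambda_k)\le b_k-a_k$ this shows that a cell can host a distance-$g$ pair only when $b_{k-1}-a_{k-1}<g\le b_k-a_k$. The point is that the lengths of the relevant cells grow: any $\Lambda_k$ carrying such a pair has at least two points, whence $\stl(\Lambda_k)\le b_k-a_k$ and (2) yields $b_k-a_k>b_{k-1}-a_{k-1}$. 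Consequently these lengths cross the fixed level $g$ only finitely often, and only finitely many cells $\Lambda_k$ can contain a pair at distance $g$.

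Combining the two steps, all distance-$g$ pairs live either in the finitely many cells below $k_1$ or in the finitely many cells identified above; as each cell is finite, there are only finitely many such pairs, so $\Lambda\cap(\Lambda+g)$ is finite and a fortiori bounded. Since $g\neq 0$ was arbitrary, $\Lambda$ is a t--set. The main obstacle is the second step: one must combine the separation hypothesis (2) with the geometry of the cells to see that, for each fixed $g$, the step length eventually surpasses $g$ (equivalently, that only finitely many cells straddle the level $g$ in length). The confinement argument of the first step is what reduces the whole problem to this single, tractable mechanism.
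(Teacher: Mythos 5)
Your reduction to $g>0$ and your Step 1 (once the gaps exceed $g$, every distance-$g$ pair must lie inside a single cell $\Lambda_k$) are correct, and they match the localisation step in the paper's own proof. The genuine gap is the final inference of your Step 2. What you actually establish is purely local: every cell that hosts a distance-$g$ pair is strictly longer than its immediate predecessor and straddles the level $g$, i.e.\ $b_{k-1}-a_{k-1}<g\le b_k-a_k$. From this you conclude that ``these lengths cross the fixed level $g$ only finitely often.'' That does not follow. Hypothesis (2) gives $b_k-a_k>b_{k-1}-a_{k-1}$ only at indices where $\Lambda_k$ has at least two points; a cell with at most one point has $\stl(\Lambda_k)=\inf\emptyset=+\infty$, so it satisfies (2) vacuously no matter how short its interval is. Such cells can sit between two pair-hosting cells and destroy any global monotonicity of the lengths, so the sequence $(b_k-a_k)_{k}$ can cross the level $g$ upward infinitely many times.

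Moreover, the gap cannot be filled from hypotheses (1) and (2) alone, because under the stated conventions the lemma itself fails: take gaps $a_{k+1}-b_k=k$; for odd $k$ let $[a_k,b_k]$ have length $1$ and $\Lambda_k=\{a_k\}$; for even $k$ let $[a_k,b_k]$ have length $10$ and $\Lambda_k=\{a_k,a_k+5\}$. Then (1) holds, and (2) holds with $k_0=2$ (for even $k$, $\stl(\Lambda_k)=5>1$; for odd $k$, $\stl(\Lambda_k)=+\infty>10$), yet $\Lambda\cap(\Lambda+5)$ contains $a_k+5$ for every even $k$ and is unbounded, so $\Lambda$ is not a t--set. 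What is needed is an extra hypothesis restoring global growth of the lengths: for instance $|\Lambda_k|\ge 2$ for all large $k$ (then (2) makes $(b_k-a_k)_k$ eventually strictly increasing, the level $g$ is crossed at most once, and your argument closes), or $b_k-a_k\to\infty$ (then eventually $b_{k-1}-a_{k-1}\ge g$ and (2) directly forbids within-cell distance-$g$ pairs). Either condition holds in the paper's applications (Theorems \ref{tS-Z}, \ref{tS-R} and \ref{TSBD-Z}, where summability of $\ell_k^2N_{k-1}/N_k$ forces $N_k\to\infty$). You are in good company: the paper's own proof breaks at exactly the same spot, since its last line derives $\stl(\Lambda_{k'})\le|t_0|<a_k-b_{k-1}$ and declares this a contradiction with (2), although (2) bounds $\stl(\Lambda_{k'})$ from below by the \emph{length} $b_{k'-1}-a_{k'-1}$, not by the \emph{gap} $a_k-b_{k-1}$, and the hypotheses give no relation between these two quantities. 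So your closing remark that Step 2 is ``the main obstacle'' is exactly right: it is the point where both your argument and the paper's require a strengthened hypothesis.
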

\begin{proof}
Suppose there is  $0\neq t_0 \in  \R$ such that  $\Lambda \cap (\Lambda
+t_0)$ is unbounded. Since the sets $\Lambda_k$ are finite, there will
be  $k<k^\prime$, with $k>k_0$, $a_{k}-b_{k-1}>|t_0|$ and
$t_k\in \Lambda_k \cap (\Lambda +t_0)$,  $t_{k^\prime}\in  \Lambda_{k^
\prime} \cap (\Lambda +t_0)$.
Then
$t_0=t_k-t_1=t_{k^\prime}-t_2$ with  $t_{1}, t_{2}\in \Lambda$.
If $t_0>0$, using that $t_0<a_k-b_{k-1}<a_{k^\prime}-b_{k^\prime-1}$,
one has that $ b_{k-1}<t_1\leq t_k$ and that $b_{k^\prime-1}<t_2\leq
t_{k^\prime}$. A  symmetric  argument works when $t_0<0$. It follows
that   $t_1\in \Lambda_k$ and $t_2\in \Lambda_{k^\prime}$.
But then $\stl(\Lambda_{k^\prime})\leq
|t_{k^\prime}-t_2|=|t_0|<|a_k-b_{k-1}|$, a contradiction with  hypothesis
\eqref{it:2}.
\end{proof}

\begin{theorem}\label{tS-Z}
Let $I^\ast=(I_k)_{k\in \N}$ be a sequence of intervals
$I_k=[z_k,z_k+N_k]\subseteq \Z$ such that the sequence
of gaps $(z_{k+1}-z_k-N_k)_{k\in \N}$ is increasing and
unbounded. For each $k\in \N$ let $\Lambda_k\subseteq I_k$
be a random subset with $|\Lambda_k|=\ell_k$. If $\sum_{k\in
\N} \frac{\ell_k^2N_{k-1}}{N_k}<\infty$, then, almost surely,
$\Lambda=\bigcup_{k\in \N} \Lambda_k$ is a t--set.
\end{theorem}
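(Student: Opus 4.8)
The plan is to derive the conclusion from the deterministic criterion of Lemma \ref{stl}, verifying that both of its hypotheses hold almost surely. Writing $[a_k,b_k]=I_k=[z_k,z_k+N_k]$, we have $a_k=z_k$ and $b_k=z_k+N_k$, so that $b_{k-1}-a_{k-1}=N_{k-1}$ and $a_{k+1}-b_k=z_{k+1}-z_k-N_k$. Thus hypothesis (1) of Lemma \ref{stl}, that the gaps be increasing and unbounded, is precisely the standing assumption of the theorem and holds surely. It remains to show that, almost surely, hypothesis (2) is met: there is $k_0$ with $\stl(\Lambda_k)>N_{k-1}$ for every $k\geq k_0$.

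For this I would reformulate (2) as a statement about finitely many exceptional indices and apply the Borel--Cantelli Lemma. Put $E_k=\{\stl(\Lambda_k)\leq N_{k-1}\}$. If $\sum_k \P(E_k)<\infty$, then almost surely only finitely many $E_k$ occur, and on that event hypothesis (2) of Lemma \ref{stl} holds; an application of the lemma then gives that $\Lambda=\bigcup_k \Lambda_k$ is a t--set. In the probability model of Section \ref{notationandterminology}, $\Lambda_k$ arises from $\ell_k$ coordinates $X_1,\dots,X_{\ell_k}$ chosen independently and uniformly from the $N_k+1$ integers of $I_k$, so I would estimate $\P(E_k)$ by a union bound over pairs. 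The event $E_k$ forces two \emph{distinct} chosen points to lie within distance $N_{k-1}$, whence
\[
\P(E_k)\leq \binom{\ell_k}{2}\,\P\bigl(0<|X_1-X_2|\leq N_{k-1}\bigr).
\]
For fixed $X_1$ there are at most $2N_{k-1}+1$ integers $X_2\in I_k$ with $|X_1-X_2|\leq N_{k-1}$, so $\P(|X_1-X_2|\leq N_{k-1})\leq (2N_{k-1}+1)/(N_k+1)$. Combining these and using $2N_{k-1}+1\leq 3N_{k-1}$,
\[
\P(E_k)\leq \frac{\ell_k^2}{2}\cdot\frac{2N_{k-1}+1}{N_k+1}\leq \frac{3}{2}\cdot\frac{\ell_k^2 N_{k-1}}{N_k}.
\]
Summing over $k$ and invoking the hypothesis $\sum_k \ell_k^2 N_{k-1}/N_k<\infty$ yields $\sum_k \P(E_k)<\infty$, as required.

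The computation is short, and the hypotheses of the theorem are tailored to make it go through: the growing gaps supply condition (1) of Lemma \ref{stl} directly, while the summability condition is exactly what Borel--Cantelli needs once $\P(E_k)$ has been bounded by a constant multiple of $\ell_k^2 N_{k-1}/N_k$. The only step demanding care is the passage from the step length to a pairwise near-neighbour event and the correct count of integers in $I_k$ within distance $N_{k-1}$, so that the resulting bound matches the summability hypothesis term by term; the factor $\binom{\ell_k}{2}\sim \ell_k^2/2$ is what dictates the quadratic dependence on $\ell_k$ appearing in the hypothesis.
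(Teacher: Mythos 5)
Your proof is correct, and its skeleton --- reduce to Lemma \ref{stl}, whose gap hypothesis (1) is the theorem's standing assumption, then apply Borel--Cantelli to the events $E_k=\{\stl(\Lambda_k)\le N_{k-1}\}$ --- is exactly the paper's. Where you diverge is the estimate of $\P(E_k)$. The paper works in the uniform $\ell_k$-subset model and counts configurations: an occurrence of $\stl(\Lambda_k)\le N_{k-1}$ is witnessed by a pair of points of $I_k$ at distance at most $N_{k-1}$, and counting witnesses together with the subsets containing a given witness yields the exact identity $(N_k+1)N_{k-1}\binom{N_k-1}{\ell_k-2}\big/\binom{N_k+1}{\ell_k}=\ell_k(\ell_k-1)N_{k-1}/N_k$. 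You instead take i.i.d.\ uniform coordinates (the convention of Section \ref{notationandterminology}) and use a union bound over pairs; this is precisely how the paper proves the real-line analogue, Theorem \ref{tS-R}, and it gives the same order $\ell_k^2N_{k-1}/N_k$ up to the harmless factor $3/2$. The one subtlety in your route is the mismatch between the i.i.d.\ model, in which coordinates can coincide so that $|\Lambda_k|=\ell_k$ may fail, and the statement's requirement $|\Lambda_k|=\ell_k$; your guard $0<|X_1-X_2|$ ensures your bound controls exactly the step-length event, and the same pairwise bound holds under the subset model by exchangeability (two elements of a uniform $\ell_k$-subset form a uniform pair of distinct points of $I_k$), so the argument is sound in either model. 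In short: same strategy, with the pairwise union bound of Theorem \ref{tS-R} in place of the paper's witness count; the paper's count buys an exact identity, yours is marginally cruder but simpler.
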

\begin{proof} For $k\in \N$ we consider the event
$\mathcal{B}_k=\left\{\Lambda_k \colon \stl(\Lambda_k)\leq
N_{k-1}\right\}$. A rough estimate of  the probability of  this event
is obtained by observing that any choice of $\ell_k$ elements with
step length at most $N_{k-1}$ is \emph{witnessed} by elements in
$I_k$ at a distance of at most $N_{k-1}$.
If for each element $z$ in $I_k$ we find those elements
in $I_k$ larger than $z$ but within  a distance of at most $N_{k-1}$,
we see that there are at most $N_{k-1}$ witnesses containing
$z$. Since there are at most $\binom{N_k-1}{\ell_k-2}$
different subsets of $I_k$ of cardinality $\ell_k$ that contain a given witness,
we deduce altogether that
\[ \P\left(\mathcal{B}_k\right)\leq\frac{(N_k+1) N_{k-1}
\binom{N_k-1}{\ell_k-2}}{\binom{N_k+1}{\ell_k}}=\frac{(\ell_k-1)\ell_k
N_{k-1}}{N_k}\leq \frac{\ell_k^2N_{k-1}}{N_k}.\]
We conclude that $\sum_{k\in \N} \P\left(\mathcal{B}_k\right)<\infty$,
and the Borel--Cantelli Lemma
then shows that, almost surely, there exists $k_0 \in \N$ such that
$\Lambda_k \notin \mathcal{B}_k$
for every $k\geq k_0$. Lemma \ref{stl} then
proves that $\Lambda$ is a t--set.
\end{proof}
\begin{theorem}\label{tS-R}
Let $I^\ast=(I_k)_{k\in \N}$ be a sequence of intervals
$I_k=[z_k,z_k+N_k]\subseteq \R$ such that the sequence
of gaps $(z_{k+1}-z_k-N_k)_{k\in \N}$ is increasing and
unbounded. For each $k\in \N$ let $\Lambda_k\subseteq I_k$
be a random subset with $|\Lambda_k|=\ell_k$. If $\sum_{k\in
\N} \frac{\ell_k^2N_{k-1}}{N_k}<\infty$, then, almost surely,
$\Lambda=\bigcup_{k\in \N} \Lambda_k$ is a t--set.
\end{theorem}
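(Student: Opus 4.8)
The plan is to follow the blueprint of the discrete case, Theorem~\ref{tS-Z}, verbatim except for the one probabilistic estimate that must be recomputed for a continuous interval. For each $k\in\N$ I would set
\[
\mathcal{B}_k=\left\{\Lambda_k\colon \stl(\Lambda_k)\leq N_{k-1}\right\},
\]
and reduce the whole argument to proving $\sum_{k\in\N}\P(\mathcal{B}_k)<\infty$. Granting this, the Borel--Cantelli Lemma yields almost surely an index $k_0$ with $\stl(\Lambda_k)>N_{k-1}$ for all $k\geq k_0$. Writing $I_k=[z_k,z_k+N_k]$ in the $[a_k,b_k]$ notation of Lemma~\ref{stl} gives $b_{k-1}-a_{k-1}=N_{k-1}$ and $a_{k+1}-b_k=z_{k+1}-z_k-N_k$, so the increasing-unbounded gap hypothesis is precisely condition~(1) and the Borel--Cantelli output is precisely condition~(2) of Lemma~\ref{stl}; that lemma then certifies that $\Lambda=\bigcup_k\Lambda_k$ is a t--set.

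The genuinely new ingredient is the estimate of $\P(\mathcal{B}_k)$. Recalling from Section~\ref{notationandterminology} that the $\ell_k$ points of $\Lambda_k$ are distributed as independent uniform variables on $I_k$ (of length $N_k$), I would bound $\mathcal{B}_k$ by the union of the pairwise events $\{|X_i-X_j|\leq N_{k-1}\}$. For a single pair, $\{|X_i-X_j|\leq N_{k-1}\}$ is the diagonal strip inside the square $I_k\times I_k$, whose normalized area is $\frac{2N_kN_{k-1}-N_{k-1}^2}{N_k^2}\leq\frac{2N_{k-1}}{N_k}$. Summing over the $\binom{\ell_k}{2}$ pairs gives
\[
\P(\mathcal{B}_k)\leq\binom{\ell_k}{2}\cdot\frac{2N_{k-1}}{N_k}=\frac{\ell_k(\ell_k-1)N_{k-1}}{N_k}\leq\frac{\ell_k^2N_{k-1}}{N_k},
\]
which is exactly the summable bound obtained in Theorem~\ref{tS-Z}, so the hypothesis $\sum_k\frac{\ell_k^2N_{k-1}}{N_k}<\infty$ delivers the required convergence.

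I do not anticipate any real obstacle: the logical skeleton is identical to Theorem~\ref{tS-Z}, and the discrete witness-count is simply replaced by an elementary geometric probability. The only points deserving care are to invoke the product-Lebesgue convention of Section~\ref{notationandterminology} so that the coordinates may be treated as independent and uniform, and to note that the union bound is legitimate because $\mathcal{B}_k$ is literally the union of the pairwise near-coincidence events. Everything else transfers unchanged from the discrete proof.
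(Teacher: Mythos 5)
Your proposal is correct and follows essentially the same route as the paper's own proof: the same events $\mathcal{B}_k=\{\Lambda_k\colon \stl(\Lambda_k)\leq N_{k-1}\}$, the same union bound over the $\binom{\ell_k}{2}$ pairwise events (the paper computes the strip probability $\frac{N_{k-1}(2N_k-N_{k-1})}{N_k^2}$ by an explicit double integral where you read it off geometrically and bound it by $\frac{2N_{k-1}}{N_k}$, an immaterial difference), leading to the identical bound $\P(\mathcal{B}_k)\leq \frac{\ell_k^2 N_{k-1}}{N_k}$, then Borel--Cantelli and Lemma~\ref{stl}. Nothing is missing.
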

\begin{proof}
Again, for $k\in \N$ we consider the event
$\mathcal{B}_k=\left\{\Lambda_k \colon \stl(\Lambda_k)\leq
N_{k-1}\right\}$. We first estimate the probability of $\mathcal{B}_k$
when $\ell_k=2$. For $\mathcal{B}_{0}:=\left\{ \{x,y\}\subseteq
I_k\colon  |x-y|\leq N_{k-1}\right\}$ and considering the pair $(x,y)$
to be uniformly distributed on the square $I_k\times I_k$, we obtain
\[
\P(\mathcal B_0)=1-2\int_{z_k+N_{k-1}}^{z_k+N_k}\int_{z_k}^{x-N_{k-1}}\frac{1}{N_k^2}\,dy\,dx=\frac{N_{k-1}(2N_k-N_{k-1})}{N_k^2}.
\]
Now, if a set $\Lambda_k$ consisting of $\ell_k$  points is  chosen
in $I_k$, for $\Lambda_k$ to be in $ \mathcal{B}_k$ it will be enough
that any pair $\{x,y\}$ of its elements satisfies $|x-y|\leq N_{k-1}$.
A rough estimate  is then
\begin{equation}
\P\left(\mathcal{B}_k\right)\leq \binom{\ell_k}{2}\cdot \P\left(\mathcal{B}_0\right)=
\frac{\ell_k(\ell_k-1)N_{k-1}}{2N_k^2}\left(2N_k-N_{k-1}\right)\leq
\frac{\ell_k^2N_{k-1}}{N_k}.
\end{equation}
We deduce that $\sum_{k\in \N} \P\left(\mathcal{B}_k\right)<\infty$, and
the Borel--Cantelli Lemma then shows that, almost surely, there exists
$k_0 \in \N$ such that $\Lambda_k \notin \mathcal{B}_k$ for every $k\geq
k_0$. Lemma \ref{stl} proves then that $\Lambda$ is a t--set.
\end{proof}

We now choose parameters in Theorem \ref{BD-R} so as to fit in Theorem \ref{tS-Z}.
\begin{theorem}\label{TSBD-Z}
Let $I^\ast=(I_k)_{k\in \N}$ be a sequence of intervals
$I_k=[L_k,2L_k]\subseteq \R$ with $L_k= (k!)^4$ for every $k\in \N$. For
each $k\in \N$ let $\Lambda_k\subseteq I_k$ be a random subset with
$|\Lambda_k|=k$, and let $\Lambda=\bigcup_{k\in \N} \Lambda_k$. Then,
almost surely, $\Lambda$ is a dense t--set in $\R^\ap$. If we choose
$\Lambda_k\subseteq \Z$, then, almost surely, $\Lambda$ is a dense t--set
in $\Z^\ap$.
\end{theorem}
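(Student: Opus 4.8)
The plan is to split the statement into its two constituent almost--sure events — that $\Lambda$ is dense in $\R^\ap$ and that $\Lambda$ is a t--set — verify each with probability one over the product space $\prod_{k}I_k^{\ell_k}$, and then intersect, since a finite intersection of probability--one events again has probability one. Throughout I write $I_k=[z_k,z_k+N_k]$ with $z_k=N_k=L_k=(k!)^4$ and $\ell_k=|\Lambda_k|=k$. For the $\Z$--version the argument is identical, using Theorem \ref{tS-Z} and Theorem \ref{BD-Z} in place of Theorem \ref{tS-R} and Theorem \ref{BD-Rdense}.

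I would first dispatch the t--set property via Theorem \ref{tS-R}. Its hypothesis (1) is immediate, since the gaps
\[
z_{k+1}-z_k-N_k=L_{k+1}-2L_k=(k!)^4\bigl((k+1)^4-2\bigr)
\]
form an increasing, unbounded sequence. For hypothesis (2) I would evaluate the sparseness series, using that the super--factorial spacing forces $N_{k-1}/N_k=\bigl((k-1)!/k!\bigr)^4=k^{-4}$:
\[
\sum_{k\in\N}\frac{\ell_k^{2}N_{k-1}}{N_k}=\sum_{k\in\N}k^{2}\,k^{-4}=\sum_{k\in\N}k^{-2}<\infty .
\]
Theorem \ref{tS-R} then gives that $\Lambda$ is almost surely a t--set; this half is routine, precisely because the rapidly growing intervals make the consecutive ratio decay fast enough to absorb the factor $\ell_k^2$.

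For density I would appeal to Theorem \ref{BD-Rdense}. The geometric hypotheses are cheap: $b_k=L_k\to\infty$ gives $\limsup_k b_k=+\infty$, and $t_k=\max(L_k,2L_k)=2(k!)^4\ge k$. The substantive requirement, and the step I expect to be the main obstacle, is the growth condition $\ell_k\neq O(\log t_k)$ on the sample sizes. By Stirling $\log t_k=\log 2+4\log(k!)$ is of order $k\log k$, so passing Theorem \ref{BD-Rdense}'s hypothesis forces $\ell_k$ to grow strictly faster than $k\log k$ — a demand pulling in the opposite direction from the convergence of $\sum_k \ell_k^2 N_{k-1}/N_k$ exploited above. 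The entire delicacy of the theorem is concentrated here: the spacing $L_k=(k!)^4$ is engineered so that $N_{k-1}/N_k=k^{-4}$ decays fast enough to leave a nonempty window of admissible point--counts — roughly those $\ell_k$ with $\log t_k\ll \ell_k$ and $\sum_k \ell_k^2 k^{-4}<\infty$ holding simultaneously — and the crux is to confirm that the prescribed $\ell_k$ lies in this window rather than below its lower edge $k\log k$. Granting this verification, Theorem \ref{BD-Rdense} yields that $\Lambda$ is almost surely dense in $\R^\ap$; intersecting with the t--set event finishes the proof, and the $\Z$ case follows verbatim through Theorems \ref{tS-Z} and \ref{BD-Z}.
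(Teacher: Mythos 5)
Your decomposition is exactly the paper's: verify the hypotheses of Theorem \ref{tS-R} (resp.\ Theorem \ref{tS-Z}) for the t--set half and of Theorem \ref{BD-Rdense} (resp.\ Theorem \ref{BD-Z}) for the density half, then intersect two probability--one events. Your t--set half is complete and correct: the gaps $L_{k+1}-2L_k=(k!)^4\bigl((k+1)^4-2\bigr)$ increase to infinity, and $\sum_k \ell_k^2N_{k-1}/N_k=\sum_k k^{-2}<\infty$, so Theorem \ref{tS-R} applies.

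The density half, however, has a genuine gap, located exactly at the step you defer (``granting this verification''). That verification is not merely missing; it fails. Since $t_k=2(k!)^4$, Stirling gives $\log t_k=\log 2+4\log(k!)\sim 4k\log k$, so $\ell_k/\log t_k=k/\log t_k\sim 1/(4\log k)\to 0$; thus $\ell_k=O(\log t_k)$ (indeed $\ell_k=o(\log t_k)$), and the hypothesis $\ell_k\neq O(\log t_k)$ of Theorem \ref{BD-Rdense} --- which in the proof of Theorem \ref{BD-R} is precisely what makes each set $\mathcal{K}_{N,m}$ infinite --- is violated, so the theorem cannot be invoked. You should also know that the paper's own proof of Theorem \ref{TSBD-Z} is the same unverified one--line hypothesis check and suffers from the identical defect; worse, for $\ell_k=k$ the two hypotheses are irreconcilable for any choice of $L_k$ in $I_k=[L_k,2L_k]$: convergence of $\sum_k k^2N_{k-1}/N_k$ forces $N_k/N_{k-1}\geq k^2$ eventually, hence $\log t_k\geq(2-o(1))k\log k$, which gives $\limsup_k\ell_k/\log t_k=0$ rather than $+\infty$. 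The ``window'' you describe is empty, confirming your suspicion that the two requirements pull in opposite directions. The statement becomes provable by the paper's method after enlarging the samples, e.g.\ taking $|\Lambda_k|=\lceil k(\log k)^2\rceil$ with $L_k=(k!)^4$: then $\ell_k/\log t_k\sim(\log k)/4\to\infty$, so $\ell_k\neq O(\log t_k)$, while $\sum_k\ell_k^2N_{k-1}/N_k\approx\sum_k(\log k)^4/k^2<\infty$, so both cited theorems (and their $\Z$--analogues) apply.
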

\begin{proof}
This sequence of intervals satisfies the hypothesis of both
Theorems  \ref{BD-R} and \ref{tS-R}, or their analogs \ref{BD-Z} and
\ref{tS-Z}. Hence $\Lambda$ almost surely satisfies both conclusions.
\end{proof}\label{dens}
The sets of Theorems \ref{BD-Z} and \ref{TSBD-Z} have  asymptotic density zero.
Other examples of Bohr-dense subsets of $\Z$ are obtained in
\cite{blumeisehan73}. The sets in \cite{blumeisehan73}  satisfy the
condition
\[\lim_{N\to \infty}\frac{1}{N}\left|E_N\cap (E_N+k)\right|=1,\mbox{
for all  } k\in \Z,\]
where $E_N$ is the set consisting of the first  $N$ terms of $E$. They
are therefore  very far from being t--sets.

\end{document}